\newtheorem{theorem}{Theorem}[section]
\newtheorem{lemma}[theorem]{Lemma}
\newtheorem{corollary}[theorem]{Corollary}
\newtheorem{conjecture}[theorem]{Conjecture}
\newtheorem{problem}[theorem]{Problem}
\numberwithin{equation}{section}
\newcommand{\R}{{\mathbb R}}
\begin{document}

\title[]{Maximizing Neumann fundamental tones of triangles}

\author[]{R. S. Laugesen and B. A. Siudeja}
\address{Department of Mathematics, University of Illinois, Urbana,
IL 61801, U.S.A.} \email{Laugesen\@@illinois.edu}
\email{Siudeja\@@illinois.edu}
\date{\today}

\keywords{Isoperimetric, free membrane.}
\subjclass[2000]{\text{Primary 35P15. Secondary 35J20}}

\begin{abstract}
We prove sharp isoperimetric inequalities for Neumann eig\-envalues
of the Laplacian on triangular domains.

The first nonzero Neumann eigenvalue is shown to be maximal for
the equilateral triangle among all triangles of given perimeter,
and hence among all triangles of given area. Similar results are
proved for the harmonic and arithmetic means of the first two
nonzero eigenvalues.
\end{abstract}

\maketitle

\vspace*{-12pt}

\section{\bf Introduction}

Eigenvalues of the Laplacian arise in physical models of wave motion, diffusion (such as heat flow) and quantum mechanics, namely as frequencies, rates of decay and energy levels. The eigenvalues are constrained by geometric considerations. For example, writing $\mu_1$ for the first nonzero eigenvalue of the Laplacian under Neumann boundary conditions on a domain of area $A$ in the plane, one has that
\[
\text{$\mu_1 A$ is maximal for disks.}
\]
This result is due to Szeg\H{o} \cite{S54} for simply connected domains, with the extension to all domains and all dimensions by Weinberger \cite{W56}. 

Thus a free membrane of given area has highest fundamental tone when the membrane is circular, and the temperature of an insulated region of given volume will relax most quickly to equilibrium when the region is spherical. 

We prove a sharper result for triangular domains in the plane:
\[
\text{$\mu_1 A$ is maximal when the triangle is equilateral.}
\]
It remains open to extend this result to $n$-gons ($n \geq 4$), and to find a higher dimensional result involving tetrahedra.

Our result on $\mu_1 A$ for triangles generalizes in three different
ways: to a stronger geometric functional, to a stronger eigenvalue
functional, and to a trade-off between the two. To strengthen the geometric functional, we write $L$ for the perimeter and prove
\[
\text{$\mu_1 L^2$ is maximal for the equilateral triangle,}
\]
which implies the result for $\mu_1 A$ by invoking the
triangular isoperimetric inequality.

Strengthening instead the eigenvalue functional, we show
\[
\Big( \frac{1}{\mu_1} + \frac{1}{\mu_2} \Big)^{\! \! -1} A
\text{\quad is maximal in the equilateral case.}
\]
That is, our result on the fundamental tone extends to the harmonic mean
of the first two nonzero eigenvalues.

We trade off a further strengthening of the eigenvalue functional
against a weakening of the geometric functional. Specifically, we
show the arithmetic mean $(\mu_1 + \mu_2)/2$ of the first two
non-zero eigenvalues is maximal for the equilateral triangle, after
normalizing the ratio of the square of the area to the sum of the squares
of the side lengths.

Our primary method is Rayleigh's Principle and the method of trial
functions. Linearly transplanted eigenfunctions of the equilateral
triangle are used to handle triangles that are close to equilateral, and
linear or quadratic trial functions handle all the others. Neither the
conformal mapping approach of Szeg\H{o} nor the ``radial
extension'' method of Weinberger seems to work for triangles.

Our triangle results suggest new open problems for general
domains, such as a possible strengthening of the
Szeg\H{o}--Weinberger bound by an isoperimetric excess term, as
explained in Section~\ref{discussion}.

Our companion paper \cite{LS09b} \emph{minimizes} $\mu_1$ among
triangles, under a diameter normalization, with the minimizer being
the degenerate acute isosceles triangle. We know of no other
papers in the literature that study sharp isoperimetric type
inequalities for Neumann eigenvalues of triangles. The Neumann eigenfunctions of triangles were investigated for the ``hot spots'' conjecture, by Ba\~{n}uelos and Burdzy \cite{BaBu}.

Dirichlet eigenvalues of triangles have received considerable
attention \cite{AF06,AF08,F06,fresiu,LR08,S07,S09}, as
discussed in Section~\ref{dirich}. Dirichlet eigenvalues of
degenerate domains have also been investigated lately
\cite{BF09,F07}.

For a modern perspective on the Szeg\H{o}--Weinberger result, including its role as a prototype for
Payne--P\'{o}lya--Weinberger type inequalities, see
the survey paper by Ashbaugh \cite{A99}. Generalizations of the Szeg\H{o}--Weinberger result to closed surfaces such as the Klein bottle, the sphere, genus $2$ surface, projective plane and equilateral torus are known too \cite{EGJ06,H70,JLNNP05,LY82,N96}. For broad surveys of
isoperimetric eigenvalue inequalities, one can consult the
monographs of Bandle \cite{B80}, Henrot \cite{He06}, Kesavan
\cite{K06} and P\'{o}lya--Szeg\H{o} \cite{PS51}.

\section{\bf Notation}
\label{notation}

The Neumann eigenfunctions of the Laplacian on a bounded plane
domain $\Omega$ with Lipschitz boundary satisfy $-\Delta u = \mu u$ with
natural boundary condition $\partial u / \partial n = 0$. The
eigenvalues $\mu_j$ are nonnegative, with
\[
0 = \mu_0 < \mu_1 \leq \mu_2 \leq \dots \to \infty .
\]
Call $\mu_1$ the \textbf{fundamental tone}, since
$\sqrt{\mu_1}$ is proportional to the lowest frequency of vibration
of a free membrane over the domain. Call the eigenfunction $u_1$ a
\textbf{fundamental mode}. The \textbf{Rayleigh Principle} says
\[
\mu_1 = \min_{v \perp 1} R[v]
\]
where $v \perp 1$ means $\int_\Omega v \, dA = 0$, and where the Rayleigh quotient is
\[
R[v] = \frac{\int_\Omega |\nabla v|^2 \, dA}{\int_\Omega v^2 \, dA} \qquad \text{for\ } v \in H^1(\Omega) .
\]

For triangular domains, we write:
\begin{itemize}
  \item[] $A=$ area,
  \item[] $l_1 \ge l_2 \ge l_3 > 0$ for the lengths of the sides,
  \item[] $L=l_1 + l_2 + l_3 =$ perimeter,
  \item[] $S^2=l_1^2+l_2^2+l_3^2 =$ sum of squares of side lengths.
\end{itemize}
We will not need this next fact, but it is interesting that $S^2 = 36I/A$ where $I$ is the moment of inertia of the triangular region \cite[formula (6)]{FLL07}.

Given nonnegative numbers $a_j$, define their
\begin{align*}
  \text{arithmetic mean} & = M(a_1,\ldots,a_n) = \frac{a_1 + \cdots + a_n}{n} , \\
  \text{harmonic mean} & = H(a_1,\ldots,a_n) = 1 \Big/ \Big( \frac{1/a_1 + \cdots + 1/a_n}{n} \Big) .
\end{align*}

Denote the first positive roots of the Bessel functions $J_0, J_1, J_1^\prime$, by
\[
j_{0,1} \simeq 2.4048 , \qquad
j_{1,1} \simeq 3.8317 , \qquad
j_{1,1}^\prime \simeq 1.8412 .
\]
%
%\begin{align*}
%j_{0,1} & \simeq 2.4048 , \\
%j_{1,1} & \simeq 3.8317 , \\
%j_{1,1}^\prime & \simeq 1.8412 .
%\end{align*}
%

\section{\bf Isoperimetric upper bounds on the fundamental tone}

The Szeg\H{o}--Weinberger result says that among
all domains of given volume, the first nonzero Neumann
eigenvalue of the Laplacian is maximized by a ball. Thus in two
dimensions, 
\begin{equation} \label{SW1}
\mu_1 A \leq \pi (j_{1,1}^\prime)^2 .
\end{equation}
Our first theorem proves a stronger inequality for triangles.
\begin{theorem}\label{1upS}
For all triangles,
\begin{gather} \label{SWbetter1}
    \mu_1 S^2\leq \frac{16\pi^2}{3}
\end{gather}
and hence
\begin{gather} \label{SWbetter2}
    \mu_1 L^2\leq 16\pi^2
\end{gather}
and
\begin{gather} \label{SWbetter3}
    \mu_1 A\leq \frac{4\pi^2}{3\sqrt{3}} .
\end{gather}
In each inequality, equality holds if and only if the triangle is
equilateral.
\end{theorem}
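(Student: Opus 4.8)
The plan is to prove the three inequalities in the order stated, since the second follows from the first and the third from the second. For \eqref{SWbetter2}, note that $S^2 = l_1^2+l_2^2+l_3^2 \le (l_1+l_2+l_3)^2 = L^2$, so $\mu_1 L^2 \ge \mu_1 S^2$ — wait, that goes the wrong way; in fact $S^2 \le L^2$ gives $\mu_1 S^2 \le \mu_1 L^2$, so I need the reverse reduction. The right approach: $3S^2 \ge L^2$ by Cauchy--Schwarz (equality iff $l_1=l_2=l_3$), hence $\mu_1 L^2 \le 3 \mu_1 S^2 \le 3 \cdot \frac{16\pi^2}{3} = 16\pi^2$. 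Similarly, the triangular isoperimetric inequality $L^2 \ge 12\sqrt{3}\, A$ (equality iff equilateral) gives $\mu_1 A \le \frac{1}{12\sqrt{3}} \mu_1 L^2 \le \frac{16\pi^2}{12\sqrt{3}} = \frac{4\pi^2}{3\sqrt{3}}$; and the equilateral equality case propagates through both reductions. So the entire theorem reduces to establishing the scale-invariant bound \eqref{SWbetter1}.

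For \eqref{SWbetter1} I would use the Rayleigh Principle with a carefully chosen trial function. By scaling I may fix $S^2$, or equivalently just work with an arbitrary triangle and track the normalization. The natural idea is to build a trial function $v$ out of linear (degree-one) polynomials, since on the equilateral triangle the fundamental mode restricted to an appropriate symmetry is essentially linear in one coordinate, and for a linear function $v$ the Dirichlet energy $\int_\Omega |\nabla v|^2\,dA$ is just $|\nabla v|^2 A$. The obstacle is the orthogonality constraint $\int_\Omega v\,dA = 0$: a single coordinate function does not have mean zero, so one centers coordinates at the centroid. Then, taking $v = \alpha (x - x_c) + \beta (y - y_c)$ with the triangle placed so its centroid is at the origin, the constraint is automatic, and
\[
R[v] = \frac{|\nabla v|^2 A}{\int_\Omega v^2\, dA}.
\]
Choosing the two-dimensional subspace spanned by $x$ and $y$ and using the standard "averaging over rotations of the trial function" trick (as in Szeg\H{o}--Weinberger), one obtains $\mu_1 \le |\nabla v|^2 A / \int_\Omega v^2\,dA$ for a suitable direction, and then
\[
\mu_1 \le \frac{2A}{\int_\Omega (x^2+y^2)\,dA} = \frac{2A}{I_O}
\]
where $I_O$ is the polar moment of inertia about the centroid. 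The remaining task is purely geometric: show $2A/I_O \le \frac{16\pi^2}{3S^2}$, i.e. $A \cdot S^2 \le \frac{8\pi^2}{3} I_O$. Using the identity quoted in the excerpt, $I_O = I = A S^2/36$, this becomes $A S^2 \le \frac{8\pi^2}{3}\cdot \frac{A S^2}{36} = \frac{2\pi^2}{27} A S^2$, which is false ($\frac{2\pi^2}{27}\approx 0.73 < 1$) — so the plain linear trial function is not good enough, and this is precisely where the work lies.

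The main obstacle, therefore, is that no single universal trial function will give the sharp constant with equality exactly at the equilateral triangle; one needs a case analysis. Near the equilateral triangle I would transplant the true equilateral eigenfunctions by the linear map carrying the equilateral triangle onto the given one, controlling the perturbation of the Rayleigh quotient to first and second order in the "shape parameters" (e.g. how far $l_1/l_3$ and the angles are from their equilateral values), and show the quotient strictly decreases unless the triangle is equilateral. For triangles bounded away from equilateral — say very obtuse, very thin, or otherwise distorted — I would use explicit linear trial functions adapted to the geometry (for instance $v$ depending on the coordinate along the longest side, or built from the two shortest sides), bounding $R[v]$ by an expression in $l_1,l_2,l_3$ and checking by elementary calculus/optimization over the admissible region $\{l_1\ge l_2\ge l_3>0,\ l_1<l_2+l_3\}$ that it stays below $16\pi^2/S^2$. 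The two regimes must be arranged to overlap so that every triangle is covered, and the strict inequality in each regime (together with continuity) forces equality only at the equilateral triangle. Carrying out the perturbation estimate sharply enough to match the explicit-trial-function regime is the delicate part; everything else is a finite, if tedious, optimization.
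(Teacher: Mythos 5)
Your reduction of \eqref{SWbetter2} and \eqref{SWbetter3} to \eqref{SWbetter1} via $L^2\le 3S^2$ and $12\sqrt3\,A\le L^2$, with equality only in the equilateral case, is exactly the paper's Lemma~\ref{averages}, and your two--regime plan (transplanted equilateral eigenfunctions near the equilateral triangle, explicit low--degree trial functions elsewhere) is in outline the same strategy the paper follows. But the proposal stops at the outline, and the one substantive claim you make about the near--equilateral regime is not correct as stated. The quantity you propose to perturb, $R[u\circ\tau^{-1}]\,S^2$ with $u$ a fixed equilateral eigenfunction transplanted by the affine map $\tau$, does \emph{not} ``strictly decrease'' away from the equilateral point in all shape directions. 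Concretely, place the triangle with vertices $(-1,0)$, $(1,0)$, $(a,b)$, transplant the antisymmetric mode $u_1$ (antisymmetric across the perpendicular bisector of the side joining $(\pm1,0)$), and set $q=3+a^2+b^2$; formula \eqref{eq:ray2} gives
\[
\mu_1 S^2 \;\le\; \frac{\bigl[(32\pi^2+243)q-1458\bigr]\,2q}{144\,b^2}.
\]
Moving the apex horizontally, $(a,b)=(\epsilon,\sqrt3)$, keeps $b^2=3$ but increases $q=6+\epsilon^2$, so this bound strictly \emph{exceeds} $16\pi^2/3$ for every $\epsilon\neq0$. The paper avoids this by normalizing so that the side to which the transplanted antisymmetric mode is adapted is the \emph{longest} side; that normalization imposes the geometric constraint $b^2\ge -\tfrac14q^2+4q-12$, and it is precisely this constraint that pushes the transplanted bound below $16\pi^2/3$ when $q\gtrsim 5.03$, i.e.\ near the equilateral point. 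Your sketch records no such normalization (nor a minimization over the full two--dimensional transplanted eigenspace, which would be an alternative fix), so the perturbation argument as described would fail.

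Beyond that, all of the quantitative content is deferred: the far--from--equilateral regime (the paper optimizes the direction in the span of $x-\tfrac a3$ and $y-\tfrac b3$, reducing to a positive--discriminant condition), and, crucially, the verification that the two regimes overlap so that every non--equilateral triangle is covered (in the paper this is an explicit comparison of two parabolas in the $(q,b^2)$--plane, one regime valid for $q\le 5.09$ and the other for $q\ge 5.04$). Finally, the equality statement needs no continuity argument: one proves strict inequality for every non--equilateral triangle and checks $\mu_1 S^2=16\pi^2/3$ for the equilateral triangle directly from its known spectrum. So: the reductions are right and the strategy matches the paper, but the central estimates are missing, and the near--equilateral step needs the longest--side normalization (or eigenspace minimization) to be salvageable.
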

Inequality \eqref{SWbetter3} for triangles improves significantly on
Szeg\H{o} and Weinberger's estimate \eqref{SW1}, because
$4\pi^2/3\sqrt{3} \simeq 7.6$ is much less than $\pi
(j_{1,1}^\prime)^2 \simeq 10.7$.

The implications \eqref{SWbetter1} $\Rightarrow$ \eqref{SWbetter2}
$\Rightarrow$ \eqref{SWbetter3} are immediate from the following
geometric inequalities.
\begin{lemma} \label{averages}
For all triangles,
\[
  12\sqrt{3}A \le L^2 \le 3S^2 .
\]
In each inequality, equality holds if and only if the triangle is
equilateral.
\end{lemma}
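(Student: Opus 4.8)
The plan is to prove the two inequalities separately, since they are of quite different character. The right-hand inequality $L^2 \le 3S^2$ holds for any three positive reals and is nothing but the Cauchy--Schwarz (equivalently, power-mean) inequality: $(l_1+l_2+l_3)^2 \le 3(l_1^2+l_2^2+l_3^2)$, with equality precisely when $l_1 = l_2 = l_3$. So this half requires only a one-line argument, and the triangle hypothesis plays no role in it.

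For the left-hand inequality $12\sqrt{3}\,A \le L^2$ --- the isoperimetric inequality for triangles --- the natural tool is Heron's formula. Writing $s = L/2$ for the semiperimeter, Heron gives $A^2 = s(s-l_1)(s-l_2)(s-l_3)$, where the three factors $s-l_1,\, s-l_2,\, s-l_3$ are strictly positive by the strict triangle inequality and sum to $s$. Applying the AM--GM inequality to these three factors yields $(s-l_1)(s-l_2)(s-l_3) \le (s/3)^3$, hence $A^2 \le s^4/27$, that is $A \le s^2/(3\sqrt{3}) = L^2/(12\sqrt{3})$, which is the claim. Equality in AM--GM forces $s-l_1 = s-l_2 = s-l_3$, i.e.\ $l_1 = l_2 = l_3$, so equality again characterizes the equilateral triangle. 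I would also record the equilateral case directly: with common side length $t$ one has $A = \sqrt{3}\,t^2/4$, $L = 3t$, $S^2 = 3t^2$, so $12\sqrt{3}\,A = 9t^2 = L^2 = 3S^2$, confirming equality throughout both bounds.

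There is no serious obstacle here; both steps are classical and short. The only points needing a little care are (i) verifying that the factors $s - l_i$ are positive, so that Heron's formula and AM--GM apply with the correct signs, and (ii) tracking the equality cases so that the ``if and only if'' clause is genuinely established for each of the two inequalities. Together these yield the stated chain $12\sqrt{3}\,A \le L^2 \le 3S^2$, and hence, by monotonicity, the implications $\eqref{SWbetter1} \Rightarrow \eqref{SWbetter2} \Rightarrow \eqref{SWbetter3}$ of Theorem~\ref{1upS}.
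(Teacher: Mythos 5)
Your proof is correct and follows essentially the same route as the paper: the left inequality via Heron's formula combined with AM--GM applied to the factors $s-l_i$ (the paper phrases this as the geometric--arithmetic mean inequality for $L-2l_i$), and the right inequality via the arithmetic--quadratic mean (Cauchy--Schwarz) comparison, with the equality cases traced back to $l_1=l_2=l_3$ in both steps.
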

The left hand inequality is simply the triangular isoperimetric
inequality. It implies that the \emph{triangular isoperimetric
excess}
\begin{equation} \label{eq:triexcess}
  {\mathcal E}_T=\frac{L^2}{12\sqrt{3}}-A
\end{equation}
is nonnegative and equals $0$ only for equilateral triangles.

\begin{theorem}\label{1opt}
For all triangles,
\begin{gather*}
  \mu_1 \cdot \left(A+\frac{\pi^2}{j_{0,1}^2}{\mathcal E}_T\right) \leq \frac{4\pi^2}{3\sqrt{3}}
\end{gather*}
with equality only for equilateral triangles. Equality also holds
asymptotically for degenerate obtuse isosceles triangles.
\end{theorem}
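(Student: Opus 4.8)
The plan is to extract the sharpened inequality from the proof of Theorem~\ref{1upS}, using the triangular isoperimetric excess $\mathcal{E}_T$ as a quantitative error term in the chain of geometric inequalities. Recall that \eqref{SWbetter3} was obtained by combining \eqref{SWbetter1}, namely $\mu_1 S^2 \le 16\pi^2/3$, with the geometric inequalities $12\sqrt{3}\,A \le L^2 \le 3S^2$ from Lemma~\ref{averages}. The key observation is that the passage from $S^2$ to $A$ loses more than just the isoperimetric deficit: one should track exactly how much. First I would write, using \eqref{SWbetter1},
\begin{equation*}
  \mu_1 A \le \frac{16\pi^2}{3} \cdot \frac{A}{S^2},
\end{equation*}
so that the theorem reduces to the purely geometric claim
\begin{equation*}
  \frac{16\pi^2}{3} \cdot \frac{A}{S^2} \cdot \left( A + \frac{\pi^2}{j_{0,1}^2}\mathcal{E}_T \right) \le \frac{4\pi^2}{3\sqrt{3}}\, A,
\end{equation*}
i.e. after dividing by $A$ and rearranging,
\begin{equation*}
  A + \frac{\pi^2}{j_{0,1}^2}\mathcal{E}_T \le \frac{S^2}{4\sqrt{3}}.
\end{equation*}
Since $\mathcal{E}_T = L^2/(12\sqrt{3}) - A$, this is equivalent to
\begin{equation*}
  \left(1 - \frac{\pi^2}{j_{0,1}^2}\right) A + \frac{\pi^2}{j_{0,1}^2}\cdot\frac{L^2}{12\sqrt{3}} \le \frac{S^2}{4\sqrt{3}}.
\end{equation*}
Now $j_{0,1}^2 \simeq 5.78 < \pi^2 \simeq 9.87$, so the coefficient of $A$ is \emph{negative}; writing $c = \pi^2/j_{0,1}^2 > 1$, the claim becomes $c\,L^2/(12\sqrt{3}) - (c-1)A \le S^2/(4\sqrt{3})$, or equivalently
\begin{equation*}
  c\, L^2 - 3(c-1) \cdot 4\sqrt{3}\, A \le 3 S^2.
\end{equation*}

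The heart of the matter is therefore this three-parameter geometric inequality relating $L^2$, $A$ and $S^2$ for triangles, with the built-in dividend that equality must hold both at the equilateral triangle and asymptotically for degenerate obtuse isosceles triangles (where $A \to 0$ and $L^2 \to 4 l_1^2$, $S^2 \to 2 l_1^2$, so $L^2 = 2 S^2$ in the limit — one checks $c \cdot 2 = 3$ would force $c = 3/2$, which is false, so in fact the degenerate case needs $A$ genuinely present, confirming the role of the excess term). I would parametrize triangles by, say, two side lengths and the included angle, or better by the standard substitution reducing to a single-variable problem after using homogeneity and the isosceles-reduction symmetry. The cleanest route: normalize $S^2 = 1$, then maximize the linear functional $c\,L^2 - 12\sqrt{3}(c-1) A$ over the (compact) set of triangle shapes, and verify the maximum equals $3$ attained exactly at the equilateral shape and asymptotically at degenerate obtuse isosceles.

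The main obstacle will be verifying this optimization rigorously rather than numerically: one must show the functional $c L^2 - 12\sqrt{3}(c-1)A - 3S^2$ is $\le 0$ on all triangles with the stated equality cases. I expect the most efficient argument uses the same kind of reduction that appears elsewhere in this circle of ideas — fixing the longest side and the foot of the opposite vertex's perpendicular, showing the functional is monotone or convex in the remaining free coordinate so that extrema occur on the boundary of shape space (degenerate triangles) or at a symmetric interior critical point (equilateral). Because $L$ involves square roots of quadratics in the vertex coordinates, the computation is not completely trivial, but after clearing radicals it reduces to checking the sign of a polynomial in one variable on a bounded interval, which is routine. I would also double-check the constant $\pi^2/j_{0,1}^2$ is the largest possible, i.e. that it is pinned down precisely by forcing asymptotic equality in the degenerate obtuse isosceles limit — this is what makes the theorem sharp and explains why $j_{0,1}$ (rather than $j_{1,1}'$) enters: the degenerate triangle's fundamental tone is governed by a one-dimensional Neumann problem whose first eigenvalue relates to $j_{0,1}$ through the thin-domain asymptotics already used to prove sharpness of \eqref{SWbetter1}.
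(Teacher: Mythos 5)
Your opening reduction is fatally flawed, and the rest of the proposal inherits the problem. You propose to deduce Theorem~\ref{1opt} from Theorem~\ref{1upS} by proving the purely geometric inequality $A+\frac{\pi^2}{j_{0,1}^2}\mathcal{E}_T\le \frac{S^2}{4\sqrt{3}}$, equivalently $c\,L^2-12\sqrt{3}(c-1)A\le 3S^2$ with $c=\pi^2/j_{0,1}^2\simeq 1.71$. This inequality is false on a whole open region of triangle shapes. For any degenerate triangle one has $l_1=l_2+l_3$, hence $L^2=4l_1^2\ge 2S^2$ and $A=0$, so the claim would force $2c\le 3$, i.e.\ $\pi^2/j_{0,1}^2\le 3/2$, which fails; concretely, for the degenerate obtuse isosceles limit $l_1=2$, $l_2=l_3=1$ the left side of your claim is $c\cdot\frac{16}{12\sqrt3}\simeq 1.31$ while the right side is $\frac{6}{4\sqrt3}\simeq 0.87$. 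You actually stumbled on exactly this obstruction (your parenthetical ``$c\cdot 2=3$ would force $c=3/2$, which is false'') but dismissed it by saying the degenerate case ``needs $A$ genuinely present''; it cannot, since $A\to 0$ there. The deeper reason no such reduction can work: Theorem~\ref{1opt} is asymptotically an equality for degenerate obtuse isosceles triangles, where $\mu_1 S^2\to 6j_{0,1}^2\simeq 34.7$ is far below $16\pi^2/3\simeq 52.6$, so the bound \eqref{SWbetter1} you start from is not sharp precisely where the target inequality saturates, and no geometric factor can close that gap. Indeed the paper's logic runs in the opposite direction: Theorem~\ref{1opt} \emph{implies} Theorem~\ref{1upS} (via Lemma~\ref{le:twotheorems}, using $\delta=3/2<\pi^2/j_{0,1}^2$), not the other way around.

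A correct proof needs new spectral input near degeneracy rather than a geometric postprocessing of \eqref{SWbetter1}. The paper handles close-to-degenerate triangles with Cheng's diameter bound $\mu_1 D^2<4j_{0,1}^2$ \cite{cheng} (which is what makes $j_{0,1}$ and the sharp constant $\pi^2/j_{0,1}^2$ appear, consistent with the one-dimensional limit you mention at the end), and then covers the remaining triangles by Rayleigh-quotient estimates with a linear trial function $x-a/3$ and with the combination $v_1+\frac13 v_2$ of transplanted equilateral eigenfunctions, organized as a case analysis in the $(r,s)=\big(\frac{l_2+l_3}{2},\frac{l_2-l_3}{2}\big)$ parameter region with convexity and polynomial sign checks. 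Your proposal contains none of this eigenvalue-side work, so even setting aside the false geometric claim, it does not engage the genuinely hard part of the theorem.
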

The discussion in Section~\ref{discussion} motivates such bounds
involving the isoperimetric excess, and shows that
Theorem~\ref{1opt} implies Theorem~\ref{1upS}.

So far we have maximized the fundamental tone under normalizations of the area, perimeter and sum of squares of the side lengths. If instead one normalizes the longest side, which equals the diameter of the triangle, then the optimal result is known already: for all convex plane domains of diameter $D$, 
\[
\mu_1 D^2 < 4j_{0,1}^2
\]
by work of Cheng \cite[Theorem~2.1]{cheng}. This estimate saturates for degenerate obtuse isosceles triangles, as discussed for example in our companion paper \cite[Proposition 3.6]{LS09b}.

One can bound the harmonic mean of the first \textit{two} nonzero
eigenvalues. For simply connected domains in two dimensions, the optimal inequality under area normalization is 
\begin{equation} \label{SW2}
H(\mu_1,\mu_2) A \leq \pi (j_{1,1}^\prime)^2
\end{equation}
with equality for disks, by work of Szeg\H{o} and Weinberger \cite[p.~634]{W56}. (For non-simply connected domains, the best result to date is $H(\mu_1,\mu_2) A \leq 4\pi$ by Ashbaugh and Benguria \cite{AB93}.) For triangles we have a stronger result:
\begin{theorem}\label{12upA}
For all triangles,
\[
    H(\mu_1,\mu_2)A \leq \frac{4\pi^2}{3\sqrt{3}}
\]
with equality if and only if the triangle is equilateral.
\end{theorem}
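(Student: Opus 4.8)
The plan is to recast the claim as a lower bound. Since $H(\mu_1,\mu_2)=2\big/\!\big(\tfrac1{\mu_1}+\tfrac1{\mu_2}\big)$ and the equilateral triangle $E$ satisfies $\mu_1(E)A(E)=\tfrac{4\pi^2}{3\sqrt3}$, the theorem is equivalent to $\tfrac1{\mu_1}+\tfrac1{\mu_2}\ge\tfrac{2A}{\mu_1(E)A(E)}=\tfrac{3\sqrt3}{2\pi^2}\,A$ for every triangle $T$, with equality only for $T=E$. I would obtain such a bound from a two-dimensional trial space: for $v_1,v_2\in H^1(T)$ orthogonal to constants, with Gram matrix $\mathcal G=\big(\int_T v_iv_j\,dA\big)$ and Dirichlet matrix $\mathcal D=\big(\int_T\nabla v_i\cdot\nabla v_j\,dA\big)$, the Courant--Fischer principle gives $\mu_k(T)\le\lambda_k$ for the two generalized eigenvalues $\lambda_1\le\lambda_2$ of $\mathcal D\,w=\lambda\,\mathcal G\,w$, and hence $\tfrac1{\mu_1}+\tfrac1{\mu_2}\ge\tfrac1{\lambda_1}+\tfrac1{\lambda_2}=\operatorname{tr}(\mathcal D^{-1}\mathcal G)$. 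I would then split triangles into two overlapping classes according to the shape quantity $A/S^2$; writing $T=\Phi(E)$ for an affine $\Phi$ with $M=D\Phi$ having singular values $\sigma_1\ge\sigma_2$ and $r=\sigma_1/\sigma_2$, one has $A/S^2=\tfrac{\sqrt3}{6}\,r/(r^2+1)\in(0,\tfrac1{4\sqrt3}]$ (consistent with Lemma~\ref{averages}), so the split is really by aspect ratio $r\ge1$.

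For \emph{elongated} triangles I would use the centroid-centered linear trial functions $v_i=x_i-\overline{x_i}$, for which $\mathcal D=A\,I$ and $\operatorname{tr}\mathcal G=\int_T\big((x_1-\overline{x_1})^2+(x_2-\overline{x_2})^2\big)\,dA=AS^2/36$, the polar moment of inertia recalled in Section~\ref{notation}. This yields $\tfrac1{\mu_1}+\tfrac1{\mu_2}\ge S^2/36$, so $H(\mu_1,\mu_2)A\le 72A/S^2$, which is $\le\tfrac{4\pi^2}{3\sqrt3}$ exactly when $A/S^2\le\pi^2/(54\sqrt3)$, i.e.\ for all $r\ge r_0$, where $r_0\approx2.30$ is the larger root of $r/(r^2+1)=\pi^2/27$. (If needed, a quadratic trial function would push this threshold further, but the linear one turns out to suffice.)

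For \emph{nearly equilateral} triangles I would transplant the first eigenmodes of $E$: take $\phi_1,\phi_2$ an $L^2(E)$-orthonormal basis of the two-dimensional first nonzero Neumann eigenspace of $E$ and set $\psi_i=\phi_i\circ\Phi^{-1}$. These are automatically orthogonal to constants, with $\mathcal G=|\det M|\,I$ and $\mathcal D=|\det M|\,\Gamma(Q)$, where $Q=(M^TM)^{-1}$ and $\Gamma(Q)_{ij}=\int_E\nabla\phi_i^{\,T}Q\,\nabla\phi_j\,dA$. The $D_3$-symmetry of $E$ (Schur's lemma on the standard $2$-dimensional representation) forces, in suitable coordinates, $\Gamma(Q)=\mu_1(E)\big(\tfrac12(\operatorname{tr}Q)\,I+\gamma\,Q_0\big)$ with $Q_0$ the traceless part of $Q$ and anisotropy constant $\gamma=\mu_1(E)^{-1}\!\int_E\!\big((\partial_1\phi_1)^2-(\partial_2\phi_1)^2\big)\,dA$; evaluating with the explicit trigonometric modes of $E$ (of the shape $\phi_1\propto\cos2\xi+2\cos\xi\cos\eta$) gives $|\gamma|=243/(32\pi^2)\approx0.77$. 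Since $|\det M|$ cancels, $\tfrac1{\mu_1}+\tfrac1{\mu_2}\ge\operatorname{tr}\big(\Gamma(Q)^{-1}\big)$, and a short elementary computation reduces the target inequality $\operatorname{tr}(\Gamma(Q)^{-1})\ge 2\sigma_1\sigma_2/\mu_1(E)$ to $\gamma^2(r+1)^2\ge r^2+1$, which holds for $1\le r\le r^{*}$ with $r^{*}\approx2.50$.

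Since $r^{*}>r_0$, the two classes together cover every triangle. For the equality statement I would argue: in the elongated class the Rayleigh--Ritz step is strict because linear functions are never Neumann eigenfunctions of a triangle, and in the nearly-equilateral class the equality $\operatorname{tr}(\Gamma(Q)^{-1})=2\sigma_1\sigma_2/\mu_1(E)$ forces $\sigma_1=\sigma_2$, i.e.\ $r=1$, whereupon $T$ is equilateral and the transplanted modes are honest eigenfunctions, giving genuine equality; so equality holds if and only if $T=E$. The main obstacle—exactly as already in the proofs of Theorems~\ref{1upS} and~\ref{1opt}—is the delicate transplantation estimate near $E$: it requires the exact value of $\gamma$, and it requires the numerical inequality $r^{*}>r_0$ to actually hold, which it does only by a slim margin, so the trial functions chosen for the elongated regime (and the precise position of the cutoff) have to be arranged with care.
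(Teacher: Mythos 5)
Your proposal is correct, and it takes a genuinely different route from the paper's. The paper also splits into a far-from-equilateral and a near-equilateral regime, but it works in the vertex coordinates $(a,b)$ with the longest side normalized, splits according to the ratio $q/b$ (where $q=a^2+b^2+3$), and applies Poincar\'e's principle in the harmonic-mean form $H(\mu_1,\mu_2)\le H(R[f_1],R[f_2])$ for pairs with orthogonal gradients: the pair $x-a/3$, $(x-a/3)^2-(3+a^2)/18$ (a linear and a quadratic function of $x$) handles $q/b\ge 5$, while the transplanted pair $v_1$, $\gamma v_1+v_2$, with $\gamma$ chosen via \eqref{eq:ray3} to orthogonalize the gradients, handles $2\sqrt{3}<q/b<5$. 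You instead use the trace form $1/\mu_1+1/\mu_2\ge\operatorname{tr}(\mathcal{D}^{-1}\mathcal{G})$, which removes the need to orthogonalize; you parametrize by the singular values of the affine map; your elongated-case bound comes from the two centroid-centered coordinate functions together with the identity $S^2=36I/A$ (which the paper records in Section~\ref{notation} but explicitly does not use), giving the clean inequality $H(\mu_1,\mu_2)\le 72/S^2$; and your near-equilateral estimate exploits the $D_3$-symmetry of $E$ to put the transplanted Dirichlet matrix in closed form. I checked the key quantitative steps: $\gamma=243/(32\pi^2)$ agrees with the gradient integrals listed in Section~\ref{equilateral}, $A/S^2=\tfrac{\sqrt{3}}{6}\,r/(r^2+1)$ is correct, the reduction to $\gamma^2(r+1)^2\ge r^2+1$ is right, and indeed $r_0\approx 2.30<r^*\approx 2.50$, so the two regimes overlap. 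Your route is more structural (basis-free, with intermediate facts of independent interest), while the paper's is more elementary and reuses its precomputed formulas \eqref{eq:ray2}--\eqref{eq:ray3}. One small point to tighten: equality in $\operatorname{tr}(\Gamma(Q)^{-1})\ge 2\sigma_1\sigma_2/\mu_1(E)$ occurs not only at $r=1$ but also at $r=r^*$, so within the near-equilateral class as you defined it the equality analysis is incomplete; this is harmless because $r^*>r_0$ puts such triangles in the elongated class, where the bound is strict (your observation that nonconstant linear functions are harmonic, hence never Neumann eigenfunctions, in fact makes $H(\mu_1,\mu_2)<72/S^2$ always strict), but the write-up should say so explicitly.
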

An even stronger inequality is conjectured in Section~\ref{discussion}, using perimeter.

Obviously Theorem~\ref{12upA} for the harmonic mean implies inequality
\eqref{SWbetter3} for the first eigenvalue.

Next, we strengthen the harmonic mean of the eigenvalues to the
arithmetic mean, at the cost of weakening the geometric functional
from $A$ to $A^2/S^2$.
\begin{theorem}\label{12upAS}
For all triangles,
\begin{gather*}
    M(\mu_1,\mu_2)\frac{A^2}{S^2}\leq \frac{\pi^2}{9}
\end{gather*}
with equality if and only if the triangle is equilateral.
\end{theorem}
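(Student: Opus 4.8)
The plan is to combine the Rayleigh Principle with carefully chosen trial functions, splitting into cases according to whether the triangle is near-equilateral or far from it, exactly as in the authors' stated methodology. Since we must bound $M(\mu_1,\mu_2)$ from above, the natural tool is the well-known two-trial-function trick: if $v_1, v_2 \in H^1(\Omega)$ both satisfy $\int_\Omega v_i \, dA = 0$ and span a two-dimensional space on which the Rayleigh quotient is controlled, then $\mu_1 + \mu_2 \leq R[v_1] + R[v_2]$ provided $v_1 \perp v_2$ in $L^2(\Omega)$; more robustly, for \emph{any} two functions orthogonal to constants one has $\mu_1 + \mu_2 \leq \sup$ of the sum of Rayleigh quotients over an orthonormalized pair, and a clean way to arrange this is to pick $v_1, v_2$ whose gradients are orthonormal up to scaling and whose mass matrix can be diagonalized. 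The target constant $\pi^2/9$ is attained by the equilateral triangle, where $\mu_1 = \mu_2 = 16\pi^2/(3S^2)$ (the first Neumann eigenvalue has multiplicity two) and $A^2/S^2 = (\sqrt{3}/4 \cdot (S^2/3)) ^2/S^2 \cdot$, i.e. $A = \sqrt{3}\,l^2/4$ with $S^2 = 3l^2$ gives $A^2/S^2 = 3l^4/16 \cdot 1/(3l^2) = l^2/16$, hence $M(\mu_1,\mu_2) A^2/S^2 = 16\pi^2/(3 \cdot 3 l^2) \cdot l^2/16 = \pi^2/9$. This confirms the constant and tells us the equality case must be rigid.

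For triangles far from equilateral, I would use \emph{linear} trial functions. Place the triangle with vertices and barycenter at the origin; then the two coordinate functions $x, y$ are automatically orthogonal to constants. Their gradients are constant, so $\int_\Omega |\nabla x|^2 = \int_\Omega |\nabla y|^2 = A$, while $\int_\Omega x^2 \, dA$ and $\int_\Omega y^2 \, dA$ are moments of inertia of the triangular region about axes through the centroid. After a rotation diagonalizing the inertia tensor, one gets $\mu_1 + \mu_2 \leq A/I_{\min} + A/I_{\max}$ where $I_{\min} \leq I_{\max}$ are the principal second moments, whose sum is the polar moment $I_{\text{polar}} = I_{\min} + I_{\max} = A S^2/36$ (using the identity $S^2 = 36I/A$ quoted in the paper, with $I = I_{\text{polar}}$). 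To turn this into the desired bound one needs $A^2/S^2 \cdot (A/I_{\min} + A/I_{\max}) \leq \pi^2/9$, i.e. $36 A^2 (I_{\min}+I_{\max}) /(S^2 \cdot 36 I_{\min} I_{\max}) = A^2 \cdot AS^2/36 /(S^2 I_{\min}I_{\max}) = A^3/(36 I_{\min} I_{\max}) \leq \pi^2/9$, so it reduces to a purely geometric inequality $4\pi^2 I_{\min} I_{\max} \geq A^3$, or equivalently a lower bound on the product of principal moments of inertia of a triangle in terms of its area. I would verify this via the explicit formula for the inertia tensor of a triangle in terms of its vertex coordinates (or side lengths and area), reduce to a two-parameter optimization over the shape of the triangle, and check the minimum occurs in the equilateral case where $I_{\min} = I_{\max} = AS^2/72 = A \cdot 3l^2/72$, giving equality. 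Since $\pi^2/9 \approx 1.097$ is only slightly above $1$, the linear trial functions will be \emph{tight} in a neighborhood of the equilateral triangle and this single estimate may in fact suffice for \emph{all} triangles, unlike the pure-area results where near-equilateral triangles required transplanted eigenfunctions.

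That possibility — that linear trial functions alone close the argument — is the feature that makes this theorem cleaner than Theorems~\ref{1upS}--\ref{12upA}, and I would check it first: the inequality $4\pi^2 I_{\min}I_{\max} \geq A^3$ should hold for every triangle with equality only in the equilateral case, because the constant $4\pi^2$ is comfortably larger than the value $36 \cdot (\text{something})$ forced at the equilateral point. If a gap nonetheless opens for near-equilateral triangles (which I do not expect, given the matching multiplicity-two eigenvalue at the equilateral point), the fallback is to transplant the two first eigenfunctions of the equilateral triangle through the unique linear map sending the equilateral triangle to the given one, estimate the resulting Rayleigh quotients by controlling the distortion of the linear map in terms of the shape parameters, and interpolate. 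The main obstacle, then, is the geometric optimization: proving $4\pi^2 I_{\min}I_{\max} \geq A^3$ sharply. This is a concrete but slightly delicate inequality about the second-moment ellipse of a triangle; I would parametrize triangles by two angles (or by the ratios $l_2/l_1, l_3/l_1$ subject to the triangle inequality), express $I_{\min}I_{\max}/A^3$ as an explicit rational-trigonometric function of these parameters using $I_{\min}+I_{\max} = AS^2/36$ together with an analogous formula for $I_{\min}I_{\max}$ (obtainable from $I_{xx}I_{yy} - I_{xy}^2$), and minimize — showing the critical point is the equilateral triangle and checking the boundary (degenerate) triangles give larger or infinite values. Handling the degenerate boundary and confirming no spurious interior critical point beats the equilateral one is where the real work lies.
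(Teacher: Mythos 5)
Your primary route fails, and it fails precisely at the equilateral triangle. First, the reduction has an algebra slip: the claimed target $4\pi^2 I_{\min}I_{\max}\geq A^3$ is not scale-invariant, so it cannot be the right inequality; keeping the factor $\tfrac12$ in the arithmetic mean and using $I_{\min}+I_{\max}=AS^2/36$, the linear-trial-function bound $M(\mu_1,\mu_2)\frac{A^2}{S^2}\leq \frac{A^2}{2S^2}\big(\frac{A}{I_{\min}}+\frac{A}{I_{\max}}\big)=\frac{A^4}{72\,I_{\min}I_{\max}}$ would require $A^4\leq 8\pi^2 I_{\min}I_{\max}$. That corrected inequality is false for the equilateral triangle: with unit side, $A=\sqrt{3}/4$, $S^2=3$, $I_{\min}=I_{\max}=\sqrt{3}/96$, so $R[x]=R[y]=A/(\sqrt{3}/96)=24$ while $\mu_1=\mu_2=16\pi^2/9\simeq 17.5$, and the bound delivered is $24\cdot\frac{1}{16}=\frac32>\frac{\pi^2}{9}\simeq 1.10$. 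Linear functions are not eigenfunctions of the equilateral triangle, so your expectation that they are "tight near equilateral" cannot hold; the centroid-based linear trial functions overshoot by the fixed factor $27/(2\pi^2)$ exactly where the theorem is sharp, and no case analysis can rescue a trial pair that misses the extremal case.

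What you relegate to a fallback is in fact the whole proof, and it is simpler than you anticipate: transplant the two equilateral eigenfunctions, $v_1=u_1\circ\tau^{-1}$ and $v_2=u_2\circ\tau^{-1}$, which are mean-zero and $L^2(T)$-orthogonal by the symmetry/antisymmetry of $u_1,u_2$. By formula \eqref{eq:ray2} (with $(\gamma,\delta)=(1,0)$ and $(0,1)$) the cross term involving $a$ cancels when the two Rayleigh quotients are averaged, giving the identity
\begin{equation*}
M(R[v_1],R[v_2])=\frac{2\pi^2 q}{9b^2}=\frac{\pi^2 S^2}{9A^2},
\end{equation*}
so Poincar\'e's principle for the arithmetic mean yields the theorem for \emph{every} triangle in one step — no near/far dichotomy, no distortion estimates, no "interpolation." You would also still owe the equality analysis, which is not automatic: the paper shows that equality forces some combination $\gamma v_1+\delta v_2$ to be an actual eigenfunction on $T$, and evaluating $\Delta v/v$ at the vertices $(0,0)$ and $(a,b)$ forces $a^2+b^2=3$, hence (with the side-length normalization) $a=0$, $b=\sqrt{3}$, i.e.\ the equilateral triangle. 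As written, your proposal neither closes the main inequality nor addresses the rigidity statement.
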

By multiplying the inequalities in Theorems~\ref{12upA} and
\ref{12upAS}, we obtain an estimate on the geometric mean of the
first two nonzero eigenvalues.
\begin{corollary} \label{cor:geom}
For all triangles,
\[
\mu_1 \mu_2 \frac{A^3}{S^2} \leq \frac{4\pi^4}{27\sqrt{3}}
\]
with equality if and only if the triangle is equilateral.
\end{corollary}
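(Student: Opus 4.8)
The plan is simply to combine Theorems~\ref{12upA} and~\ref{12upAS} by multiplication, exploiting the elementary fact that for two nonnegative numbers the product of their arithmetic and harmonic means equals their product:
\[
M(\mu_1,\mu_2)\cdot H(\mu_1,\mu_2)=\frac{\mu_1+\mu_2}{2}\cdot\frac{2\mu_1\mu_2}{\mu_1+\mu_2}=\mu_1\mu_2 .
\]
So essentially all the analytic content is already carried by the two earlier theorems, and what remains is bookkeeping.

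First I would record the two bounds in the form needed: Theorem~\ref{12upA} gives $H(\mu_1,\mu_2)\,A\le \tfrac{4\pi^2}{3\sqrt3}$ and Theorem~\ref{12upAS} gives $M(\mu_1,\mu_2)\,A^2/S^2\le \tfrac{\pi^2}{9}$, with both left-hand sides nonnegative. Multiplying these two inequalities and invoking the identity above, together with $A\cdot (A^2/S^2)=A^3/S^2$, yields
\[
\mu_1\mu_2\,\frac{A^3}{S^2}=\bigl(M(\mu_1,\mu_2)H(\mu_1,\mu_2)\bigr)\cdot A\cdot\frac{A^2}{S^2}\le \frac{4\pi^2}{3\sqrt3}\cdot\frac{\pi^2}{9}=\frac{4\pi^4}{27\sqrt3},
\]
which is the asserted bound.

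For the equality statement I would argue as follows. Since the two factors being multiplied are each nonnegative and each satisfies its own inequality, equality in the product forces equality in \emph{each} of the two constituent inequalities; by the equality clauses of Theorems~\ref{12upA} and~\ref{12upAS}, either of these equalities holds only for the equilateral triangle. Conversely, for the equilateral triangle both theorems hold with equality, so the product does as well. Hence equality in the corollary characterizes the equilateral triangle. There is no real obstacle here beyond this last point: the only thing requiring a moment's care is noting that equality in a product of two valid inequalities with nonnegative sides is equivalent to simultaneous equality in both factors.
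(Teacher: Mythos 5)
Your proof is correct and follows the same route as the paper, which simply multiplies the inequalities of Theorems~\ref{12upA} and~\ref{12upAS} using $M(\mu_1,\mu_2)H(\mu_1,\mu_2)=\mu_1\mu_2$. Your extra care with the equality case (both factors are strictly positive, so equality in the product forces equality in each) is a valid elaboration of what the paper leaves implicit.
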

A stronger inequality is conjectured in Section~\ref{discussion}, using just the area.

\section{\bf Eigenfunctions of the equilateral triangle} \label{equilateral}

This section gathers together the first three Neumann eigenfunctions
and eigenvalues of the equilateral triangle, which we use later to
construct trial functions for close-to-equilateral triangles.

\subsection{The equilateral triangle} \label{sec:equi}
The modes and frequencies of the equilateral triangle were derived
two centuries ago by Lam\'{e}, albeit without a proof of completeness. We present the first few modes below. For proofs, see the recent exposition (including completeness) by McCartin
\cite{M02}, building on work of Pr\'{a}ger \cite{Pr98}. A different approach is due to Pinsky \cite{Pi80}.

Consider the the equilateral triangle $E$ with vertices at $(0,0)$,
$(1,0)$ and $(1/2,\sqrt{3}/2)$. Then $\mu_0=0$, with eigenfunction
$u_0 \equiv 1$, and
\[
\mu_1 = \mu_2 = \frac{16 \pi^2}{9}
\]
with eigenfunctions
  \begin{align*}
    u_1(x,y) & = 2 \Big[ \cos \big( \frac{\pi}{3}(2x-1) \big) + \cos \big( \frac{2\pi y}{\sqrt{3}} \big) \Big] \sin \big( \frac{\pi}{3} (2x-1)
    \big) , \\
    u_2(x,y) & = \cos \big( \frac{2\pi}3(2x-1) \big) - 2\cos\big( \frac\pi3(2x-1) \big) \cos\big( \frac{2\pi y}{\sqrt{3}} \big) .
  \end{align*}
Clearly $u_1$ is antisymmetric with respect to the line of symmetry
$\{ x=1/2 \}$ of the equilateral triangle, since
$u_1(1-x,y)=-u_1(x,y)$, whereas $u_2$ is symmetric with respect to
that line.

It is easy to check that equality holds for
the equilateral triangle in Theorems~\ref{1upS}, \ref{1opt}, \ref{12upA} and \ref{12upAS},
because $\mu_1 = 16 \pi^2/9$ and $S^2=3, L=3$ and $A=\sqrt{3}/4$.

We evaluate some integrals of $u_1$ and $u_2$, for later use:
\begin{align*}
\int_E u_1^2 \, dA & = \int_E u_2^2 \, dA = \frac{3\sqrt{3}}{8} \\
\int_E \Big( \frac{\partial u_1}{\partial x} \Big)^{\! 2} \, dA & = \int_E \Big( \frac{\partial u_2}{\partial y} \Big)^{\! 2} \, dA = \frac{32\pi^2 + 243}{32\sqrt{3}} \\
\int_E \Big( \frac{\partial u_1}{\partial y} \Big)^{\! 2} \, dA & = \int_E \Big( \frac{\partial u_2}{\partial x} \Big)^{\! 2} \, dA = \frac{32\pi^2 - 243}{32\sqrt{3}} \\
\int_E \frac{\partial u_1}{\partial x} \frac{\partial u_1}{\partial y} \, dA & = \int_E \frac{\partial u_2}{\partial x} \frac{\partial u_2}{\partial y} \, dA = 0
\end{align*}
and also some integrals of cross-terms:
\begin{align*}
\int_E u_1 u_2 \, dA = \int_E \frac{\partial u_1}{\partial x} \frac{\partial u_2}{\partial x} \, dA & = \int_E \frac{\partial u_1}{\partial y} \frac{\partial u_2}{\partial y} \, dA = 0 \\
\int_E \frac{\partial u_1}{\partial x} \frac{\partial u_2}{\partial y} \, dA & = \frac{81\sqrt{3}}{32}+\pi \\
\int_E \frac{\partial u_1}{\partial y} \frac{\partial u_2}{\partial x} \, dA & = \frac{81\sqrt{3}}{32}-\pi
\end{align*}

\subsection{Transplanting the eigenfunctions}

Here we transplant functions from the equilateral triangle $E$ to an arbitrary triangle $T$. Assume $T$ has vertices at $(-1,0), (1,0)$ and $(a,b)$, where $b>0$. Write
\[
q = a^2 + b^2 + 3 .
\]
Let $\tau$ be the affine transformation of $E$ to $T$ that maps the vertices $(0,0)$,
$(1,0),(1/2,\sqrt{3}/2)$ to $(-1,0),(1,0),(a,b)$, respectively. Its inverse is
\[
\tau^{-1}(x,y)=\big( (1+x-ay/b)/2 , \sqrt{3}y/2b \big) .
\]
Given a function $u$ on $E$, define $v = u \circ \tau^{-1}$ on $T$ by
\[
v(x,y) = (u \circ \tau^{-1})(x,y) = u \big( (1+x-ay/b)/2 , \sqrt{3}y/2b \big) .
\]
If $u$ has mean value zero, $\int_E u \, dA = 0$, then so does $v$, with $\int_T v \, dA = 0$. By straightforward changes of variable,
\begin{equation} \label{eq:ray1}
\frac{\int_T |\nabla v|^2 \, dA}{\int_T v^2 \, dA} = \frac{\int_E \big[ (a^2 + b^2)u_x^2 - 2\sqrt{3}a u_x u_y + 3u_y^2 \big] \, dA}{4b^2 \int_E u^2 \, dA} .
\end{equation}
In particular, taking a linear combination $u=\gamma u_1 + \delta u_2$ of the eigenfunctions on $E$, we let $v = u \circ \tau^{-1}$ to deduce
\begin{equation} \label{eq:ray2}
\frac{\int_T |\nabla v|^2 \, dA}{\int_T v^2 \, dA} = \frac{[(32\pi^2 + 243)q-1458]\gamma^2 - 972 \sqrt{3} a \gamma \delta + [(32\pi^2 - 243)q+1458] \delta^2}{144b^2 (\gamma^2 + \delta^2)}
\end{equation}
by substituting $u=\gamma u_1 + \delta u_2$ into \eqref{eq:ray1} and recalling the integrals in Section~\ref{sec:equi}.

Similarly, putting $v_1 = u_1 \circ \tau^{-1}$ and $v_2=u_2 \circ \tau^{-1}$ implies
\begin{equation} \label{eq:ray3}
\frac{\int_T \nabla v_1 \cdot \nabla v_2 \, dA}{\int_T |\nabla v_1|^2 \, dA} = - \frac{486 \sqrt{3} a}{(32\pi^2 + 243)q-1458} ,
\end{equation}
by changing variable back to $E$ and then using integrals from Section~\ref{sec:equi}.

\section{\bf Proof of Theorem \ref{1upS} and Lemma \ref{averages}}

First we prove Lemma \ref{averages}. Recall that $M(a_1,\ldots,a_n)$
denotes the arithmetic mean and define
\begin{itemize}
\item[] $G(a_1,\ldots,a_n) = \sqrt[n]{a_1 \cdots a_n} =$ geometric
  mean,
\item[] $Q(a_1,\ldots,a_n) = \sqrt{(a_1^2 + \cdots + a_n^2)/n} =$ quadratic
  mean.
\end{itemize}
Then for any triangle,
\begin{align*}
12\sqrt{3}A
& = 3\sqrt{3L} \, G(L-2l_1,L-2l_2,L-2l_3)^{3/2} && \text{by Heron's formula} \\
& \le 3\sqrt{3L} \, M(L-2l_1,L-2l_2,L-2l_3)^{3/2} \\
& = L^2 \\
& = 9M(l_1,l_2,l_3)^2 \\
& \le 9Q(l_1,l_2,l_3)^2 \\
& = 3S^2.
\end{align*}
Inequalities between these means become equalities if and only if
$l_1=l_2=l_3$, meaning the triangle is equilateral. Thus Lemma
\ref{averages} is proved.

\vspace{6pt} For Theorem \ref{1upS}, it remains to prove $\mu_1 S^2
\leq 16\pi^2/3$ with equality if and only if the triangle is
equilateral.

Let $T$ be a triangle. By rescaling and rotating and reflecting, we can assume the longest side of $T$ has length $2$ with vertices at $(-1,0)$ and $(1,0)$,
and that the third vertex $(a,b)$ satisfies $a \geq 0$ and $b>0$ and
\begin{equation}
    (a+1)^2+b^2 \leq 2^2 . \label{cond4}
\end{equation}
Let us express these constraints in terms of new variables
\[
p=b^2 \qquad \text{and} \qquad q=3+a^2+b^2 .
\]
By definition, $q > 3$ and $0 < p \leq q-3$. Condition \eqref{cond4}
says $2a \leq 6-q$, and since $a \geq 0$ we conclude $q \leq 6$.
Hence
\begin{equation} \label{eq:constr1}
3 < q \leq 6 , \qquad 0 < p \leq q-3 .
\end{equation}
Further, by substituting $a=\sqrt{q-3-p}$ into $2a \leq 6-q$ and
then squaring, we find
\begin{equation} \label{eqQc}
p \geq Q_c(q) = -\frac{1}{4}q^2+4q-12 .
\end{equation}
The constraint region determined by \eqref{eq:constr1}--\eqref{eqQc}
is plotted in Figure~\ref{fig:constraint}.

\begin{figure}[t]
    \begin{center}
  \begin{tikzpicture}[xscale=3,yscale=1.5]
    \draw[->] (3,0) -- (6.2,0) node [below=-2pt] {\small $q$};
    \draw[->] (3,0) -- (3,3.2) node [left=-2pt] {\small $p$};
    \draw (3,0) +(0,0.03) -- +(0,-0.03) node [below] {\tiny $3$};
    \draw (4,0) +(0,0.03) -- +(0,-0.03) node [below] {\tiny $4$};
    \draw (6,0) +(0,0.03) -- +(0,-0.03) node [below] {\tiny $6$};
    \draw (3,0) +(0.015,0) -- +(-0.015,0) node [left] {\tiny $0$};
    \draw (3,3) +(0.015,0) -- +(-0.015,0) node [left] {\tiny $3$};
    \draw[dotted] (3,3) -| (6,0);
    \draw[domain=4:6,fill=black!40] plot (\x,-\x*\x/4+4*\x-12) --
(3,0) -- (4,0);
  \end{tikzpicture}
    \end{center}
    \caption{The constraint region in the proof of Theorem \ref{1upS}, showing the line $p=q-3$ and the
curve $p=Q_c(q)$.}
    \label{fig:constraint}
  \end{figure}
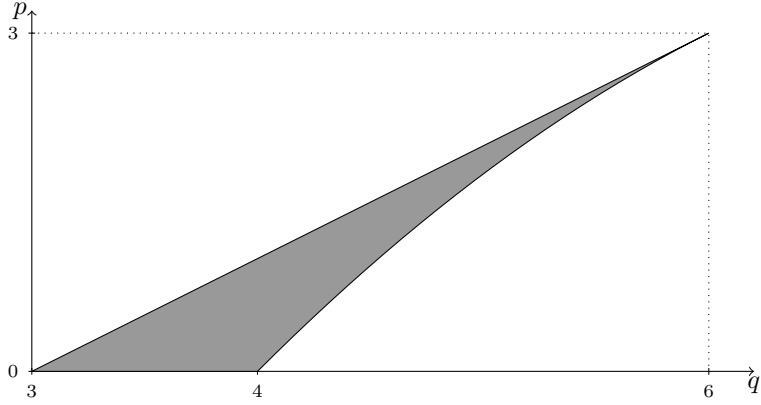

When $q=6$ the constraints require $p=3$, so that $a=0,b=\sqrt{3}$,
and so $T$ is equilateral. In that case equality holds in the
theorem, with $\mu_1 S^2 = 16\pi^2/3$ by Section~\ref{equilateral}.
So from now on we assume $q<6$.

Continuing with the proof, note the squares of the side lengths of
the triangle add up to
\begin{equation} \label{Sis}
S^2 = 2^2 + (a-1)^2 + b^2 + (a+1)^2 + b^2 = 2q.
\end{equation}

Consider now the linear functions
\[
    f(x,y)=x-\frac{a}{3}, \qquad g(x,y)=y-\frac{b}{3} ,
\]
which integrate to zero over the triangle $T$. Our first trial function is the linear
combination $v=f+\gamma g$ where $\gamma \in \R$. By Rayleigh's Principle,
\begin{align}
\mu_1 S^2 & \leq R[f+\gamma g] S^2 \notag \\
& = \frac{18(1+\gamma^2)}{3+(a+\gamma b)^2} \, 2q . \label{eq:trial1}
\end{align}
This last expression is less than $16\pi^2/3$ (as desired for the
theorem) if
\[
    27(1+\gamma^2)q-4\pi^2 \big( 3+(a+\gamma b)^2 \big)<0.
\]
The left hand side is a quadratic polynomial in $\gamma$, and hence
an appropriate $\gamma$ exists if the discriminant is positive,
which is equivalent to
\begin{equation} \label{region1}
    p < Q_\ell(q) = \frac{9}{16\pi^4} (4\pi^2-27)q^2 .
\end{equation}

For our second trial function, let $u_1$ be the first antisymmetric
eigenfunction of the equilateral triangle $E$ and recall the affine transformation $\tau$ from $E$ onto $T$, as described in Section~\ref{equilateral}. The transplanted function $v=u_1 \circ \tau^{-1}$ integrates to 0 over $T$, and so can be used as a trial function. By Rayleigh's Principle,
\begin{align*}
\mu_1 S^2 & \leq R[u_1 \circ \tau^{-1}] S^2 \\
& = \frac{(32\pi^2+243)q-1458}{144p} \, 2q ,
\end{align*}
where the Rayleigh quotient has been evaluated by formula \eqref{eq:ray2} with $\gamma=1$ and $\delta=0$. Notice the last expression is less than $16\pi^2/3$ if
\begin{equation}
    p > Q_1(q) = \frac{1}{384\pi^2} \big( (32\pi^2+243)q^2-1458q \big) . \label{region2}
\end{equation}

Observe $Q_c(q)>Q_1(q)$ when
\[
5.03 \simeq \frac{768 \pi^2}{128 \pi^2 + 243} < q < 6 .
\]
Therefore if $5.04 \leq q < 6$ then the constraint \eqref{eqQc} implies
$p \geq Q_c(q) > Q_1(q)$, so that \eqref{region2} holds and hence $\mu_1 S^2 < 16\pi^2/3$.

Next, observe $Q_\ell(q)>Q_1(q)$ when
\[
0 < q < \frac{1458\pi^2}{32 \pi^4 - 621\pi^2 + 5832} \simeq 5.10 .
\]
Therefore if $3 < q \leq 5.09$ then for all $p \in \R$, either $p <
Q_\ell(q)$ or else $p \geq Q_\ell(q) > Q_1(q)$, so that either
\eqref{region1} or \eqref{region2} holds; in either case, we conclude $\mu_1 S^2 < 16\pi^2/3$.

We have proved $\mu_1 S^2 < 16\pi^2/3$ in the whole constraint region $3< q < 6$, and so the proof is
complete.

\vspace{6pt} To summarize the above proof, notice that
close-to-equilateral triangles (with $5.04 \leq q < 6$ above) are handled by the trial function $u_1$, which is the linearly transplanted eigenfunction of the equilateral triangle, while
far-from-equilateral triangles ($3 < q \leq 5.09$) are treated with either that same transplanted eigenfunction or else the linear trial function $f+\gamma g$.

\section{\bf Proof of Theorem \ref{1opt}} \label{pr:1opt}
Equality in the theorem holds for equilateral triangles, as observed in Section~\ref{equilateral}. In addition, when an obtuse isosceles triangle degenerates towards a line segment, equality holds in the limit because $A \to 0, L \to 2D$ and $\mu_1 \to 4j_{0,1}^2 D^{-2}$ (by \cite[Proposition~3.6]{LS09b}).

Assume for the rest of the proof that the triangle is non-equilateral. By rescaling, rotating and reflecting, we reduce to considering the triangle $T$ with vertices $(-1,0), (1,0)$ and $(a,b)$, where $a \geq 0, b>0$ and all the sidelengths are less than or equal to $2$. This triangle has area $A=b$ and diameter $D=2$.

Introduce new parameters $r=(l_2+l_3)/2$ and $s=(l_2-l_3)/2$ defined in terms of the sidelengths
\[
l_2 = \sqrt{(a+1)^2+b^2} \quad \text{and} \quad l_3 = \sqrt{(a-1)^2+b^2} .
\]
These new parameters occupy a triangular region in the $rs$-plane (see Figure~\ref{fig1} below) with
\[
1 < r \leq 2, \qquad 0 \leq  s < 1 , \qquad r+s \leq 2 ,
\]
since $l_3 \leq l_2 \leq l_1 = 2$ and $l_1 < l_2 + l_3$ and $l_2 < l_1 + l_3$. The line $r=1$ corresponds to degenerate triangles. Since $T$ is not equilateral, we know $(r,s) \neq (2,0)$ .

In terms of the new parameters, we have
\begin{align*}
a & = rs , \\
b^2 & = (r^2-1)(1-s^2) , \\
q & = a^2 + b^2 + 3 = r^2+s^2+2 , \\
L & = l_1 + l_2 + l_3 = 2(1+r) , \\
A & = b = \sqrt{(r^2-1)(1-s^2)} , \\
{\mathcal E}_T & = \frac{L^2}{12\sqrt{3}} - A = \frac{(1+r)^2}{3\sqrt{3}} - \sqrt{(r^2-1)(1-s^2)} .
\end{align*}

To handle close-to-degenerate triangles, we recall Cheng's bound $\mu_1 D^2 < 4j_{0,1}^2$ for convex domains (see \cite[Theorem~2.1]{cheng}); since our triangle $T$ has diameter $D=2$, Cheng's bound gives $\mu_1 < j_{0,1}^2$ and thus
\[
\mu_1 \Big( A+\frac{\pi^2}{j_{0,1}^2} {\mathcal E}_T \Big) < j_{0,1}^2 A + \pi^2 {\mathcal E}_T .
\]
This last expression is less than $4\pi^2/3\sqrt{3}$ (as desired for the theorem) if
\begin{equation} \label{sector}
(r-1)(r+3) - \Big( 1 - \frac{j_{0,1}^2}{\pi^2} \Big) \sqrt{27(r^2 - 1)(1-s^2)} <  0.
\end{equation}
The left side is increasing with $s$. Putting $s=2-r$ (the largest value of $s$ in our parameter region), we find that \eqref{sector} holds if
\[
(r+3) - \Big( 1 - \frac{j_{0,1}^2}{\pi^2} \Big) \sqrt{27(r+1)(3-r)} <  0.
\]
The expression on the left is convex for $-1 < r < 3$, and is negative at $r=1$ and $r=5/4$, and so  inequality \eqref{sector} certainly holds for $1 < r \leq 5/4$. Thus the theorem is proved in that range, as indicated in Figure~\ref{fig1}. Next, suppose $5/4 \leq r \leq 3/2$ and $0 \leq s \leq 1/3$. Replacing $s$ by $1/3$ in \eqref{sector}, and replacing $r+3$ by $(3/2)+3=9/2$, we see that it suffices to prove
\[
(r-1) \Big( \frac{9}{2} \Big)^{\! 2} - \Big( 1 - \frac{j_{0,1}^2}{\pi^2} \Big)^{\! 2} \cdot 27(r+1)\big( 1-(1/3)^2 \big) < 0.
\]
This linear inequality is easily established when $r \leq 3/2$. Hence the theorem holds when $5/4 \leq r \leq 3/2$ and $0 \leq s \leq 1/3$, as indicated in Figure~\ref{fig1}.

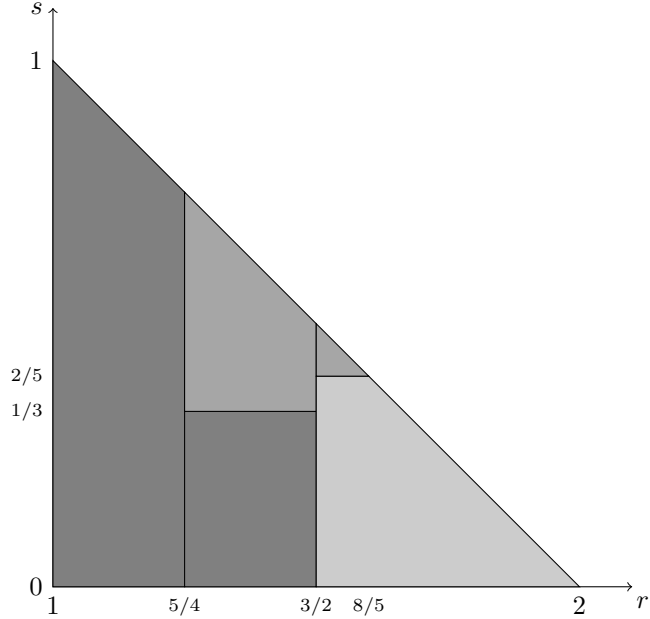
\begin{figure}[t]
\begin{center}
\begin{tikzpicture}[scale=7,smooth]
  \draw[<->] (0,1.1) -- (0,0) -- (1.1,0);
  \draw (0,0) node [below] {\small $1$}
        (1/4,0) node [below] {\tiny $5/4$}
        (1/2,0) node [below] {\tiny $3/2$}
        (3/5,0) node [below] {\tiny $8/5$}
        (1.12,0) node [below] {\small $r$}
        (1,0) node [below] {\small $2$}
        (0,0) node [left] {\small $0$}
        (0,1/3) node [left] {\tiny $1/3$}
        (0,2/5) node [left] {\tiny $2/5$}
        (0,1.1) node [left] {\small $s$}
        (0,1) node [left] {\small $1$};
  \fill [fill=black!50,draw=black] (0,0) -- (0,1) -- (1/4,3/4) -- (1/4,0) -- cycle;
  \fill [fill=black!50,draw=black] (1/4,0) rectangle (1/2, 1/3);
  \fill [fill=black!35,draw=black] (1/4,1/3) -- (1/4,3/4) -- (1/2,1/2) -- (1/2,1/3) --cycle;
  \fill [fill=black!35,draw=black] (1/2,2/5) -- (1/2,1/2) -- (3/5,2/5) -- cycle;
  \fill [fill=black!20,draw=black] (1/2,0) -- (1/2,2/5) -- (3/5,2/5) -- (1,0) -- cycle;
\end{tikzpicture}

\end{center}
\caption{Different shadings represent different trial functions used to prove Theorem~\ref{1opt}.
The darkest shading represents the circular sector-based trial function underlying Cheng's bound; intermediate shading represents a linear trial function; and the lightest shading represents a trial function based on eigenfunctions of the equilateral triangle.
The point $(2,0)$ corresponds to the equilateral triangle, and degenerate triangles have $r=1$.} \label{fig1}
\end{figure}

For the remaining parameter regions in Figure~\ref{fig1}, we will show
\begin{equation} \label{eq:opt1}
  R[v] \Big( A+\frac{12}{7} {\mathcal E}_T \Big) < \frac{4\pi^2}{3\sqrt{3}}
\end{equation}
for some trial function $v$ having mean value zero; notice here we have replaced $\pi^2/j_{0,1}^2$ with the slightly larger value $12/7$.

First, consider the linear trial function $v(x,y)=x-a/3$, which has mean value zero and $R[v]=18/(3+a^2)$. The desired estimate \eqref{eq:opt1} is then equivalent to
\begin{equation} \label{linearrs}
  (r+1)^2 - \frac{7\pi^2}{54}(3+r^2 s^2) < \frac{5}{12}\sqrt{27(r^2 - 1)(1-s^2)} .
\end{equation}
Restrict to the region where $5/4 \leq r \leq 3/2$ and $1/3 \leq s \leq 2-r$.
The left and right sides of inequality \eqref{linearrs} each decrease as $s$ increases, and so we put $s=2-r$
in the right side and $s=1/3$ in the left, reducing our task to proving the inequality
\[
(r+1)^2 - \frac{7\pi^2}{486} (27+r^2) - \frac{5}{12} \sqrt{27(r^2 - 1)(r-1)(3-r)} <  0 .
\]
The expression is convex for $1 < r < 3$, and so it is enough to verify the inequality at the endpoints $r=5/4$ and $r=3/2$. Direct calculation shows it is true at those endpoints, and so the theorem is proved when $5/4 \leq r \leq 3/2$ and $1/3 \leq s \leq 2-r$.

Next restrict to the region $3/2\le r\le 8/5$ and $2/5\le s\le 2-r$. Like above, we put $s=2-r$ in the right side and $s=2/5$ in the left. The resulting expression is again convex and the inequality is true at the endpoints $r=3/2$ and $r=8/5$.

The third case uses a stronger version of \eqref{eq:opt1} that is easier to handle. We have
\[
A+\frac{12}{7} {\mathcal E}_T=\frac{L^2}{7\sqrt{3}}-\frac{5}{7}A \leq \frac{L^2}{7\sqrt{3}}-\frac{180 A^2}{7\sqrt{3}L^2}
\]
by the triangular isoperimetric inequality $12\sqrt{3}A \leq L^2$. Thus \eqref{eq:opt1} will hold if we can show
\begin{equation} \label{eq:opt2}
R[v] (L^2 - 180\frac{A^2}{L^2}) < \frac{28\pi^2}{3}.
\end{equation}

Consider the eigenfunctions $u_1$ and $u_2$ of the equilateral triangle $E$, as in Section~\ref{equilateral}, and the affine transformation $\tau$ from $E$ to $T$. Transplant the eigenfunctions by $v_1=u_1 \circ \tau^{-1}$ and $v_2 = u_2 \circ \tau^{-1}$, so that $v_1$ and $v_2$ integrate to $0$ over $T$. Then take the trial function
\[
v = v_1 + \frac{1}{3} v_2 .
\]
Its Rayleigh quotient $R[v]$ can be evaluated by formula \eqref{eq:ray2} with $\gamma=1$ and $\delta=1/3$. Substituting in this formula for $R[v]$ reduces the desired estimate \eqref{eq:opt2} to
\begin{equation} \label{equirs}
  U(r,s) V(r,s) + W(r,s) < 0
\end{equation}
where
\begin{align*}
U(r,s) & = \left(\frac{320\pi^2}{9}+216 \right)(r^2+s^2)-324\sqrt{3}rs+\frac{640\pi^2}{9}-864 , \\
V(r,s) & = 4(r+1)^3 + 45(r-1)(s^2-1) , \\
W(r,s) & = \frac{4480\pi^2}{3}(r-1)(r+1)^2(s^2-1) .
\end{align*}
%
%One can easily check that $V \geq 0$ for $r \geq 1$. Also, $U \geq 0$ 
%since it is obtained from $R[v]$. Note that $W<0$ since $s<1$.

We will prove inequality \eqref{equirs} in the trapezoidal region
defined by $3/2 \leq r \leq 2$  and $0 \leq s \leq \min\left\{
2/5,2-r \right\}$. First we show $UV+W$ is convex with respect to $s$. The second derivative of $UV+W$ with respect to $s$ is
\begin{align*}
  432(32-132x+114x^2+49x^3)+\frac{640\pi^2}{9}(32+306x+282x^2+91x^3)\\
  -87480x(x+1)\sqrt{3}s+480(40\pi^2+243)xs^2 ,
\end{align*}
where we have put $x=r-1>0$. We want to show this quantity is positive. We may discard the $s^2$ term, since it is certainly positive. Further, we may replace $\sqrt{3}s$ by the larger number $1$, noting $s \leq 2/5$ in the trapezoidal region, and we may replace $\pi^2$ by the smaller number $9$. After these reductions, one is left with the polynomial
\[
34304 + 51336 x + 142248 x^2 + 79408 x^3 ,
\]
which is positive. Hence $UV+W$ is convex with respect to $s$, in the trapezoidal region.

By convexity, it suffices to prove \eqref{equirs} on the upper and lower boundary portions of the trapezoid, that is, where $3/2 \leq r < 2$ and $s=0$, or $3/2 \leq r\leq 8/5$ and $s=2/5$, or $8/5 \leq r < 2$ and $s=2-r$.

We start with $3/2 \leq r < 2$ and $s=0$. By substituting $s=0$ and $r=x+3/2$ into \eqref{equirs}, we reduce to the inequality
\begin{align*}
(x - 1/2) \Big( (160\pi^2 + 972) x^4+(1760\pi^2+10692) x^3 + (4680\pi^2+32805) x^2 & \\
+(3400\pi^2+35235) x + (-3100\pi^2+34020) \Big) & < 0,
\end{align*}
which obviously holds true for $0 \leq x < 1/2$, that is, for $3/2 \leq r < 2$. Incidentally, the root at $x=1/2$ arises from the equilateral triangle ($r=2$).

Now we take $8/5 \leq r < 2$ and $s=2-r$. By substituting $s=2-r$ and $r=x+8/5$ into \eqref{equirs}, we reduce to the inequality
\begin{align*}
(x - 2/5) \Big( (7000000 \pi^2 + 7441875 c) x^4 + (21400000 \pi^2 + 
    15278625 c) x^3 & \\
    + (34200000 \pi^2 + 
    8693325 c) x^2 + (7976000 \pi^2 + 
    12510855 c) x & \\ 
    + (-10211200 \pi^2 + 
   11572632 c) \Big) & < 0 ,
\end{align*}
where $c=4 + 3 \sqrt{3}$. Note the constant term in the quartic is positive. Thus the inequality holds when $0 \leq x < 2/5$, that is, when $8/5 \leq r < 2$. 

Lastly we take $3/2 \leq r \leq 8/5$ and $s=2/5$. By substituting $s=2/5$ and $r=x+3/2$ into \eqref{equirs}, we reduce to the inequality
\begin{align*}
(243000 + 40000 \pi^2) x^5 + (2551500 - 145800 \sqrt{3} + 
    420000 \pi^2) x^4 & \\
    + (7341030 - 1312200 \sqrt{3} + 
    1095600 \pi^2) x^3 + (6530625 - 2996190 \sqrt{3} & \\ 
    + 934600 \pi^2) x^2 + (4352859 - 3623130 \sqrt{3} - 
    138480 \pi^2) x & \\
    + (-4211433 - 2383830 \sqrt{3} + 820260 \pi^2) & < 0 .
\end{align*}
The coefficients of the second and higher powers of $x$ are positive, and so this quintic polynomial is convex. The polynomial is negative at $x=0$ and $x=1/10$, and hence is negative whenever $0 \leq x \leq 1/10$, that is, whenever $3/2 \leq r \leq 8/5$. This observation completes the proof.

\section{\bf Proof of Theorem \ref{12upA}}
The harmonic mean of the first two nonzero eigenvalues is characterized in terms of the Rayleigh quotient by Poincare's variational principle \cite[p.~99]{B80}:
\[
   H(\mu_1,\mu_2) = \min \big\{ H(R[f_1],R[f_2]) : \text{$f_1 \perp 1, f_2 \perp 1$ and $\nabla f_1 \perp \nabla f_2$} \big\}.
\]

As in the earlier proofs, we reduce to considering the triangle $T$ with vertices
$(-1,0), (1,0)$ and $(a,b)$ where $a \geq 0, b>0$ and $(a+1)^2+b^2 \leq 2^2$. Notice $a < 1$ and $b \leq \sqrt{3}$, and that the triangle has area $A=b$. Recall the definition
\[
q = a^2 + b^2 + 3
\]
and observe
\begin{equation} \label{eq:qb}
q \geq b^2 + 3 \geq 2 \sqrt{3} b .
\end{equation}

Consider the polynomial trial functions
\begin{align*}
  f_1(x,y)&=x-\frac{a}{3},\\
  f_2(x,y)&=\Big( x - \frac{a}{3} \Big)^{\! 2} - \frac{3+a^2}{18},
\end{align*}
whose coefficients have been chosen to ensure $f_1$ and $f_2$ have mean value zero over $T$ and have orthogonal gradients ($\int_T \nabla f_1 \cdot \nabla f_2 \, dA = 0$). These trial functions have Rayleigh quotients
\[
R[f_1] = \frac{18}{3+a^2} , \qquad R[f_2] = \frac{360}{7(3+a^2)} .
\]
Hence by Poincar\'{e}'s principle,
\begin{align*}
H(\mu_1,\mu_2)A & \le H(R[f_1],R[f_2])A \\
& =\frac{80b}{3(3+a^2)} .
\end{align*}
This last expression is less than $4\pi^2/3\sqrt{3}$ if
\begin{equation} \label{eq:harmonic1}
b+\frac{20\sqrt{3}}{\pi^2} < \frac{q}{b} .
\end{equation}
Thus strict inequality holds in the theorem if \eqref{eq:harmonic1} is true.

Next consider the eigenfunctions $u_1$ and $u_2$ of the equilateral triangle $E$, as in Section~\ref{equilateral}, and the affine transformation $\tau$ from $E$ to $T$. Transplant the eigenfunctions by $v_1=u_1 \circ \tau^{-1}$ and $v_2 = u_2 \circ \tau^{-1}$, so that $v_1$ and $v_2$ integrate to $0$ over $T$. Let $\gamma \in \R$. The trial functions $v_1$ and $\gamma v_1 + v_2$ have Rayleigh quotients
\begin{align*}
  R[v_1] & = \frac{(32\pi^2+243)q-1458}{144b^2}, \\
  R[\gamma v_1 + v_2] & = \frac{[(32\pi^2 + 243)q-1458]\gamma^2 - 972 \sqrt{3} a \gamma + [(32\pi^2 - 243)q+1458]}{144b^2 (\gamma^2 + 1)} ,
\end{align*}
as shown by formula \eqref{eq:ray2} with $\delta=0$ and $\delta=1$, respectively.

We choose the coefficient $\gamma$ so that the gradients of $v_1$ and $\gamma v_1 + v_2$ are orthogonal:
\[
  \gamma=-\frac{\int_T \nabla v_1 \cdot \nabla v_2 \, dA}{\int_T |\nabla v_1|^2 \, dA} =
  \frac{486\sqrt{3}a}{(32\pi^2+243)q-1458}
\]
by formula \eqref{eq:ray3}. Then by Poincar\'{e}'s principle,
\begin{align*}
H(\mu_1,\mu_2)A 
& \le H(R[v_1],R[\gamma v_1 + v_2])A \\
& = \frac{(1024\pi^4-243^2)q^2+12 \cdot 243^2 b^2}{4608\pi^2 bq} ,
\end{align*}
which is less than $4\pi^2/3\sqrt{3}$ if and only if
\begin{equation} \label{eq:harmonic2}
2\sqrt{3} < \frac{q}{b} < \frac{2\sqrt{3} \cdot 243^2}{1024\pi^4 - 243^2} \simeq 5.03 .
\end{equation}
Hence when \eqref{eq:harmonic2} is true, strict inequality holds in the theorem.

To complete the proof, we divide into three cases. First, if $q/b=2\sqrt{3}$ then $T$ is equilateral (because $a=0$ and $b=\sqrt{3}$ by considering equality in \eqref{eq:qb}), so that equality holds in the theorem by Section~\ref{equilateral}. Second, if
\[
2\sqrt{3} < \frac{q}{b} < 5
\]
then strict inequality holds in the theorem by \eqref{eq:harmonic2}. Third, suppose
\[
5 \leq \frac{q}{b} ,
\]
which means $5b \leq a^2+b^2+3$. Since $a<1$ we deduce $0<(b-1)(b-4)$, so that $b<1$. Therefore estimate \eqref{eq:harmonic1} is true, because its left side is at most $1+20\sqrt{3}/\pi^2 \simeq 4.51$ while its right side is at least $5$. Hence once again the theorem holds with strict inequality.

\section{\bf Proof of Theorem \ref{12upAS}}
The arithmetic mean of the first two nonzero eigenvalues is characterized in terms of the Rayleigh quotient by Poincare's variational principle \cite[p.~98]{B80}:
\[
   M(\mu_1,\mu_2) = \min \big\{ M(R[f_1],R[f_2]) : \text{$f_1 \perp 1, f_2 \perp 1$ and $f_1 \perp f_2$} \big\}.
\]

Like in the earlier proofs, we need only consider the triangle $T$ with vertices
$(-1,0), (1,0)$ and $(a,b)$, where $a \geq 0, b>0$ and
\begin{equation} \label{sideconstr}
(a+1)^2+b^2 \leq 2^2 .
\end{equation}
Recall the eigenfunctions $u_1$ and $u_2$ of the equilateral triangle $E$, as in Section~\ref{equilateral}, and the affine transformation $\tau$ from $E$ to $T$. Transplant the eigenfunctions by $v_1=u_1 \circ \tau^{-1}$ and $v_2 = u_2 \circ \tau^{-1}$, so that the trial functions $v_1$ and $v_2$ integrate to $0$ over $T$. Note $\int_T v_1 v_2 \, dA = 0$ by the antisymmetry and symmetry properties of $u_1$ and $u_2$.

The Rayleigh quotients evaluate to
\[
  R[v_1] = \frac{(32\pi^2+243)q-1458}{144b^2} \quad \text{and} \quad
  R[v_2] = \frac{(32\pi^2 - 243)q+1458}{144b^2}
\]
by formula \eqref{eq:ray2} with $\delta=0$ and $\gamma=0$, respectively. Hence
\begin{align*}
M(\mu_1,\mu_2) & \le M(R[v_1],R[v_2]) \\
& = \frac{2\pi^2 q}{9b^2} \\
& =\frac{\pi^2 S^2}{9A^2}
\end{align*}
since $S^2=2q$ by formula \eqref{Sis}, and $A=\frac{1}{2}\cdot 2 \cdot b = b$. This last estimate is the desired inequality.

If the triangle is equilateral then equality holds in the theorem, by Section~\ref{equilateral}.

Suppose equality holds in the theorem. We will show $T$ is equilateral. Since equality holds in our argument above, the arithmetic mean of $\mu_1$ and $\mu_2$ equals $M(R[v_1],R[v_2])$, which implies by the proof of the variational principle (see \cite[p.~98]{B80}) that the span of $v_1$ and $v_2$ equals the span of some two eigenfunctions with eigenvalues $\mu_1$ and $\mu_2$; these eigenfunctions can be assumed orthogonal in $L^2(T)$. Hence there exists a linear combination $v=\gamma v_1 + \delta v_2$ (with coefficients $\gamma,\delta$, not both zero) that is an eigenfunction of the Laplacian on $T$. By direct calculation,
\begin{align*}
\frac{\Delta v}{v} & = \ \ \frac{2\pi^2}{9b^2} (a^2 + b^2 - 9) \qquad \text{at $(x,y)=(0,0)$,} \\
\frac{\Delta v}{v} & = -\frac{2\pi^2}{9b^2} (a^2 + b^2 + 3) \qquad \text{at $(x,y)=(a,b)$,}
\end{align*}
where we used the definition $v=(\gamma u_1 + \delta u_2) \circ \tau^{-1}$ and called on the formulas for $u_1, u_2$ and $\tau^{-1}$ in Section~\ref{equilateral}. These expressions for $(\Delta v)/v$ at $(0,0)$ and at $(a,b)$ must be equal, since $(\Delta v)/v$ is constant by the eigenfunction property. Hence $a^2+b^2=3$. The constraint \eqref{sideconstr} then implies $a \leq 0$, so that $a=0$ and hence $b = \sqrt{3}$. Thus $T$ is equilateral, completing the proof of the equality statement.

\section{\bf Discussion of isoperimetric excess, and open problems} \label{discussion}

One could attempt to strengthen Szeg\H{o} and Weinberger's result \eqref{SW1} for general plane domains by adding a multiple of the general \emph{isoperimetric excess}, which is defined by
\[
  {\mathcal E}=\frac{L^2}{4\pi}-A .
\]
Note the excess is nonnegative by the isoperimetric inequality, and that it
equals $0$ only for disks.

\begin{problem} \label{SWexcess}
Does there exist $\delta>0$ such that
\[
\mu_1 \cdot (A+\delta{\mathcal E}) \leq \pi (j_{1,1}^\prime)^2
\]
for all convex plane domains? What is the largest possible
$\delta$?
\end{problem}

Among general domains,
\[
\text{$\mu_1 L^2$ is \emph{not} maximal for the disk,}
\]
because the equilateral triangle and the square both have $\mu_1 L^2 = 16\pi^2 \simeq
158$, which exceeds the value $4\pi^2 (j_{1,1}^\prime)^2 \simeq
133$ for the disk. Hence Problem~\ref{SWexcess} needs $\delta<1$, because when $\delta=1$ one has $A+\delta {\mathcal E} = L^2/4\pi$. 

\begin{problem} \label{muperimeter}
Determine the maximizers for $\mu_1 L^2$, among all bounded convex
domains in the plane.
\end{problem}
The convexity hypothesis eliminates domains with fractal boundary, for which $L$ is infinite and $\mu_1$ can be positive \cite{P08}. A result somewhat similar to Problem~\ref{SWexcess} was proved by Nadirashvili \cite{N97}, but with a measure theoretic ``asymmetry'' correction instead of the isoperimetric excess.

\medskip
The triangular version of Problem~\ref{SWexcess} is to find $\delta>0$ such that
\begin{equation} \label{excessmu}
   \mu_1 \cdot (A+\delta{\mathcal E}_T) \leq \frac{4\pi^2}{3\sqrt{3}} ,
\end{equation}
where we recall the triangular isoperimetric excess ${\mathcal E}_T = (L^2/12\sqrt{3})-A$ defined in \eqref{eq:triexcess}.

We have already proved triangular excess bounds of the form \eqref{excessmu}: the perimeter bound $\mu_1 L^2 \leq 16 \pi^2$ in Theorem~\ref{1upS} has that form for $\delta=1$, because $A + {\mathcal E}_T =  L^2/12\sqrt{3}$. Theorem~\ref{1opt} is even
stronger, for it proves \eqref{excessmu} with $\delta =
\pi^2/j_{0,1}^2 \simeq 1.7$ and hence with all smaller values of
$\delta$ too, such as $\delta=1$ and $\delta=3/2$.

Theorem~\ref{1opt} implies Theorem~\ref{1upS}, because \eqref{excessmu} with $\delta = 3/2$ implies the
sum-of-squares bound $\mu_1 S^2 \leq 16 \pi^2/3$ by Lemma~\ref{le:twotheorems} below. 
\begin{lemma} \label{le:twotheorems}
For all triangles,
\[
  S^2 \leq \frac{12}{\sqrt{3}} \Big( A+\frac{3}{2}{\mathcal E}_T \Big) ,
\]
with equality for equilateral triangles and asymptotic equality for
degenerate acute isosceles triangles.
\end{lemma}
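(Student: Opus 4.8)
The plan is to reduce the claimed inequality to a symmetric polynomial inequality in the three side lengths and then recognize it as a sum of squares. First I would unwind the definition of the triangular excess: since ${\mathcal E}_T = L^2/(12\sqrt{3}) - A$, one finds $A + \frac{3}{2}{\mathcal E}_T = L^2/(8\sqrt{3}) - A/2$, so that multiplying by $12/\sqrt{3}$ shows the assertion of the lemma is equivalent to
\[
  S^2 \le \frac{L^2}{2} - 2\sqrt{3}\,A , \qquad\text{that is,}\qquad L^2 - 2S^2 \ge 4\sqrt{3}\,A .
\]

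Next I would introduce the new variables
\[
  x = -l_1 + l_2 + l_3 , \qquad y = l_1 - l_2 + l_3 , \qquad z = l_1 + l_2 - l_3 ,
\]
which are positive for a genuine triangle (by the triangle inequalities) and satisfy $x + y + z = L$. A direct computation gives $L^2 - 2S^2 = xy + yz + zx$, so in particular the left side of the displayed inequality is nonnegative; and Heron's formula becomes $16 A^2 = (x+y+z)\,xyz = L\,xyz$. Since both sides of $L^2 - 2S^2 \ge 4\sqrt{3}\,A$ are nonnegative, squaring is reversible, and the inequality becomes the classical bound $(xy + yz + zx)^2 \ge 3\,xyz\,(x+y+z)$ for nonnegative reals, which I would prove by writing the difference as a sum of squares:
\[
  (xy+yz+zx)^2 - 3\,xyz\,(x+y+z) = \frac{1}{2}\Big[ (xy-yz)^2 + (yz-zx)^2 + (zx-xy)^2 \Big] \ge 0 .
\]

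For the equality statements, equality in the sum of squares forces $xy = yz = zx$; for a genuine triangle $x, y, z > 0$, and then this forces $x = y = z$, i.e. $l_1 = l_2 = l_3$, the equilateral case. For the asymptotic claim, let an acute isosceles triangle with $l_1 = l_2$ degenerate by $l_3 \to 0^+$; then $x = y = l_3 \to 0$, so $L^2 - 2S^2 = xy + yz + zx \to 0$ and $A \to 0$, while $S^2 \to 2l_1^2 > 0$, whence $S^2$ and $\frac{12}{\sqrt{3}}(A + \frac{3}{2}{\mathcal E}_T) = \frac{L^2}{2} - 2\sqrt{3}\,A$ both converge to $2l_1^2$ and their ratio tends to $1$. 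I do not expect a genuine obstacle here; the only points needing care are the equality bookkeeping — in particular noting that squaring is reversible precisely because $L^2 - 2S^2 = xy + yz + zx \ge 0$ — and reading "asymptotic equality" as the ratio (not the difference) of the two sides tending to $1$ along the degenerating family.
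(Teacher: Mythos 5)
Your proof is correct, and it takes a genuinely different route from the paper. The paper normalizes the longest side to length $2$ and works in the parameters $r=(l_2+l_3)/2$, $s=(l_2-l_3)/2$ already introduced for the proof of Theorem~\ref{1opt}, writing $L=2(1+r)$, $A=\sqrt{(r^2-1)(1-s^2)}$, $S^2=2(2+r^2+s^2)$, reducing the claim to $2r-(1+s^2)\geq\sqrt{3(r^2-1)(1-s^2)}$, and then squaring to exhibit the difference as $(1+3s^2)\bigl(r-2\tfrac{1+s^2}{1+3s^2}\bigr)^2+3s^2\tfrac{(1-s^2)^2}{1+3s^2}\geq 0$; equality at $(r,s)=(2,0)$ gives the equilateral case and $(r,s)\to(1,1)$ the degenerate acute isosceles asymptotics. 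You instead stay scale-invariant and symmetric in the side lengths: your reduction to $L^2-2S^2\geq 4\sqrt{3}A$ is the same algebra in disguise, but the Ravi substitution $x=L-2l_1$, $y=L-2l_2$, $z=L-2l_3$ together with Heron's formula $16A^2=Lxyz$ turns it into the classical inequality $(xy+yz+zx)^2\geq 3xyz(x+y+z)$, whose sum-of-squares form $\tfrac12\bigl[(xy-yz)^2+(yz-zx)^2+(zx-xy)^2\bigr]$ you verify correctly (and your reversibility-of-squaring remark is exactly the point that needs saying, since $xy+yz+zx>0$ for a genuine triangle). Your equality analysis ($x=y=z$, i.e.\ equilateral) and the limit $l_1=l_2$ fixed, $l_3\to 0^+$ for the degenerate acute isosceles case are both right and match the paper's conclusions. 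What each approach buys: yours is self-contained, fully symmetric, and independent of any normalization, essentially reducing the lemma to a textbook symmetric-function inequality; the paper's version recycles the $(r,s)$ coordinates used throughout Section~\ref{pr:1opt}, which keeps the bookkeeping uniform across the eigenvalue arguments and makes the degenerate corner $(r,s)=(1,1)$ visible in the same chart used elsewhere.
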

\begin{proof}
In the notation of Section~\ref{pr:1opt} we have
\begin{align*}
L & = 2(1+r) , \\
A & = \sqrt{(r^2-1)(1-s^2)} , \\
S^2 & = 2(2+r^2+s^2) ,
\end{align*}
where the parameters satisfy $1 < r \leq 2 , 0 \leq s < 1$. Substituting these quantities into the lemma, we see the task is to prove
\[
2r - (1+s^2) \geq \sqrt{3(r^2-1) (1-s^2)} .
\]
The left side is positive. By squaring both sides and rearranging, we reduce to the equivalent inequality
\[
(1+3s^2) \Big( r - 2 \frac{1+s^2}{1+3s^2} \Big)^{\! 2} + 3s^2 \frac{(1-s^2)^2}{1+3s^2} \geq 0 ,
\]
which is clearly true. Equality holds when $r=2,s=0$, which is the equilateral case. Equality holds asymptotically when $s=1,r=1$, which corresponds to a degenerate acute isosceles triangle.
\end{proof}
Incidentally, we settled on the choice of $\delta$ in
Theorem~\ref{1opt} by increasing $\delta$ until some
non-equilateral triangle also gave equality in the theorem. Any
further increase would prevent the equilateral
triangle from being optimal.

\medskip
Is there a \emph{lower} excess bound for $\mu_1$, complementing the
upper bounds in Theorem \ref{1opt} and Problem~\ref{SWexcess}?
\begin{problem} \label{mulower}
Is there a constant $\delta>0$ such that for all triangles,
\[
    \mu_1 (A+\delta {\mathcal E}_T)\ge \frac{4\pi^2}{3\sqrt{3}} \, ?
\]
Is there a constant $\delta>0$ such that for all bounded Lipschitz plane domains,
\[
    \mu_1 (A+\delta {\mathcal E})\ge \pi (j_{1,1}^\prime)^2 \, ?
\]
\end{problem}
For triangles one would need $\delta \geq 4\pi^2/j_{1,1}^2$ at
least, in order for the inequality in the Problem to hold for the degenerate
acute isosceles triangle (see \cite[Corollary~3.5]{LS09b}).
For general domains one would need $\delta \geq ( j_{1,1}^\prime)^2$ at least, in
order for the inequality to hold for the degenerate rectangle.

\medskip
Turning now from the fundamental tone to the harmonic mean of the first two eigenvalues, we raise:
\begin{conjecture}\label{12upL}
For all triangles,
\[
H(\mu_1,\mu_2)L^2 \leq 16\pi^2
\]
with equality if and only if the triangle is equilateral.
\end{conjecture}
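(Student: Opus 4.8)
The plan is to mimic the proof of Theorem~\ref{12upA}, but replace the area normalization $A$ by the perimeter normalization $L^2$, and exploit the triangular isoperimetric inequality $12\sqrt{3}A \le L^2$ to bridge between the two. As before, I would normalize the longest side of $T$ to have length $2$, with vertices $(-1,0), (1,0)$ and $(a,b)$ satisfying $a \ge 0$, $b > 0$ and $(a+1)^2 + b^2 \le 4$, so that $A = b$, $L = 2 + l_2 + l_3$ with $l_2 = \sqrt{(a+1)^2+b^2}$, $l_3 = \sqrt{(a-1)^2+b^2}$, and $q = a^2+b^2+3$. It is convenient to switch to the parameters $r = (l_2+l_3)/2$, $s = (l_2-l_3)/2$ of Section~\ref{pr:1opt}, in which $L = 2(1+r)$, $A = \sqrt{(r^2-1)(1-s^2)}$, $q = r^2+s^2+2$, and the parameter region is the triangle $1 < r \le 2$, $0 \le s < 1$, $r + s \le 2$, with $(r,s) = (2,0)$ the equilateral case.

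The main tool is again Poincar\'e's variational principle for the harmonic mean,
\[
H(\mu_1,\mu_2) = \min \big\{ H(R[f_1],R[f_2]) : f_1 \perp 1,\ f_2 \perp 1,\ \nabla f_1 \perp \nabla f_2 \big\}.
\]
I would use exactly the two pairs of trial functions from the proof of Theorem~\ref{12upA}. First, the polynomial pair $f_1 = x - a/3$ and $f_2 = (x-a/3)^2 - (3+a^2)/18$, with $R[f_1] = 18/(3+a^2)$ and $R[f_2] = 360/(7(3+a^2))$, giving $H(R[f_1],R[f_2]) = 80/(3(3+a^2))$; then $H(\mu_1,\mu_2)L^2 \le 80 L^2/(3(3+a^2)) = 320(1+r)^2/(3(3+s^2))$ (since $a^2 = r^2s^2$... more precisely $3 + a^2 = 3 + r^2s^2$, so one works with that directly). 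This is $< 16\pi^2$ exactly when $3 + r^2s^2 > 20(1+r)^2/(3\pi^2)$. Second, the transplanted equilateral eigenfunctions $v_1 = u_1 \circ \tau^{-1}$ and $\gamma v_1 + v_2$ with $\gamma$ chosen by \eqref{eq:ray3} to orthogonalize the gradients; from the computation in the proof of Theorem~\ref{12upA} one gets
\[
H(\mu_1,\mu_2) \le \frac{(1024\pi^4 - 243^2)q^2 + 12\cdot 243^2 b^2}{4608\pi^2 b q},
\]
so $H(\mu_1,\mu_2) L^2 = 4(1+r)^2 \cdot \big[(1024\pi^4 - 243^2)q^2 + 12\cdot 243^2 b^2\big] / (4608\pi^2 b q)$, which is to be shown $< 16\pi^2$ on the remaining part of the parameter region. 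One may also throw in the linear bound $\mu_1 L^2 \le 16\pi^2$ of Theorem~\ref{1upS} itself to cover degenerate triangles, via $H(\mu_1,\mu_2) \le \mu_2$ being useless but $H(\mu_1,\mu_2)L^2 \le \mu_1 L^2 \cdot 2\mu_2/(\mu_1+\mu_2)$ is also not directly helpful; instead, in the degenerate regime $r$ near $1$ the second trial-function bound should already suffice, since there $b \to 0$ while $q \to r^2 + s^2 + 2$ stays bounded below, and the $b^2$ term is negligible, matching the equality case of degenerate obtuse isosceles triangles.

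The strategy for the region-chasing is: show that the polynomial-trial-function region (where $3 + r^2 s^2 > 20(1+r)^2/(3\pi^2)$) and the transplanted-eigenfunction region together cover the whole parameter triangle, subdividing by $r$ and $s$ as in Figures~\ref{fig:constraint} and \ref{fig1}, reducing each piece to a polynomial inequality in one variable by substituting the extreme value of the other variable (using monotonicity or convexity), and verifying the resulting low-degree polynomial inequalities at endpoints. The equality case $(r,s) = (2,0)$ is handled directly by Section~\ref{equilateral}, and for the ``only if'' direction one argues as in Theorem~\ref{12upA}: strict inequality holds in both trial-function bounds away from the equilateral point, so equality forces $T$ equilateral. \textbf{The main obstacle} I anticipate is that, unlike in Theorem~\ref{12upA}, the area does not appear as a clean multiplicative factor — $L^2$ and the quantities $q, b$ are related through $12\sqrt{3}A \le L^2$ only as an \emph{inequality}, not an identity — so the transplanted-eigenfunction bound expressed in $(r,s)$ will involve $\sqrt{(r^2-1)(1-s^2)}$ in a denominator, producing square roots that must be cleared by careful squaring (valid since both sides are positive), and the resulting higher-degree polynomial inequalities may be genuinely tight near the equilateral vertex. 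The conjecture being stated as a conjecture rather than a theorem suggests the authors themselves did not push these estimates through; it is plausible that near-equilateral triangles are \emph{not} covered by this pair of trial functions and require a genuinely new idea, perhaps a third trial function interpolating between the linear and transplanted ones, or a sharper exploitation of the excess term ${\mathcal E}_T$ in the spirit of Theorem~\ref{1opt}.
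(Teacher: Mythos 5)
This statement is Conjecture~\ref{12upL}: the paper does not prove it (it is explicitly left open, supported only by numerical evidence), so there is no proof of the authors to compare against, and the real question is whether your outline could be completed. It cannot, at least not with the trial functions you propose. What you have written is a plan: the covering of the $(r,s)$ parameter region and the resulting polynomial verifications are never carried out, and you yourself flag the near-equilateral regime as the likely obstruction. That instinct is correct, and the obstruction is fatal for this choice of trial functions. (A small transcription slip first: the displayed quantity $\big[(1024\pi^4-243^2)q^2+12\cdot 243^2 b^2\big]/(4608\pi^2 bq)$ in the proof of Theorem~\ref{12upA} equals $H(R[v_1],R[\gamma v_1+v_2])\,A$, not $H(R[v_1],R[\gamma v_1+v_2])$; your perimeter bound is therefore missing a factor $1/b$. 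This does not change the conclusion below.)

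Concretely, the polynomial pair gives $H(\mu_1,\mu_2)L^2\le 320(1+r)^2/\big(3(3+r^2s^2)\big)$, which at $(r,s)=(2,0)$ equals $320>16\pi^2\simeq 158$, so it is useless near the equilateral corner, exactly as in the area-normalized proof. The transplanted pair is then the only candidate there, and the corrected requirement $H(R[v_1],R[\gamma v_1+v_2])\,L^2<16\pi^2$ reads
\[
(1+r)^2\Big[\big(1024\pi^4-243^2\big)q^2+12\cdot 243^2\,b^2\Big]\;<\;18432\,\pi^4\,b^2 q,
\qquad b^2=(r^2-1)(1-s^2),\quad q=r^2+s^2+2 .
\]
At $(r,s)=(2,0)$ this holds with equality, and along the isosceles line $s=0$ the difference of the two sides vanishes together with its first $r$-derivative at $r=2$, while its second derivative there is negative (about $-8.1\times 10^6$); for instance at $s=0$, $r=1.9$ the left side exceeds the right by roughly $0.14\%$, and at $r=1.99$ it still exceeds it. So $H(R[v_1],R[\gamma v_1+v_2])\,L^2>16\pi^2$ for subequilateral isosceles triangles arbitrarily close to equilateral, and neither of your two families covers any neighborhood of the equilateral triangle. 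This is the essential difference from Theorem~\ref{12upA}: there the condition $2\sqrt3<q/b\lesssim 5.03$ leaves room on the correct side of the equality value $q/b=2\sqrt3$, whereas under perimeter normalization the transplanted-eigenfunction bound crosses $16\pi^2$ to second order \emph{into} the parameter region. Proving the conjecture would require a genuinely sharper device near equilateral (a larger trial space, or an excess-type strengthening in the spirit of Theorem~\ref{1opt}); as it stands, your proposal does not yield a proof.
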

This conjecture would be stronger than Theorem~\ref{12upA}, where we used $A$ instead of $L^2$.

For the geometric mean our numerical work similarly suggests:
\begin{conjecture}\label{harmonicA}
For all triangles,
\[
\mu_1 \mu_2 A^2 \leq \frac{16\pi^4}{27}
\]
with equality if and only if the triangle is equilateral.
\end{conjecture}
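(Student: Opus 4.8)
The plan is to deduce Conjecture~\ref{harmonicA} from the two theorems already at our disposal, exactly as Corollary~\ref{cor:geom} was deduced, but being more careful about the geometric factor. Multiplying Theorem~\ref{12upA} ($H(\mu_1,\mu_2) A \le 4\pi^2/3\sqrt{3}$) by Theorem~\ref{12upAS} ($M(\mu_1,\mu_2) A^2/S^2 \le \pi^2/9$) and using $H(\mu_1,\mu_2) M(\mu_1,\mu_2) = \mu_1 \mu_2$ gives $\mu_1 \mu_2 A^3/S^2 \le 4\pi^4/27\sqrt{3}$, which is Corollary~\ref{cor:geom}. To upgrade the geometric factor from $A^3/S^2$ to $A^2$, it would suffice to know that $A^2 \le (S^2/A) \cdot (\text{something} \le \sqrt{3}A \cdot \text{const})$; more precisely, the conjectured bound $\mu_1 \mu_2 A^2 \le 16\pi^4/27$ follows from the Corollary provided
\[
  \mu_1 \mu_2 A^2 = \Big( \mu_1 \mu_2 \frac{A^3}{S^2} \Big) \frac{S^2}{A} \le \frac{4\pi^4}{27\sqrt{3}} \cdot \frac{S^2}{A} ,
\]
so we would want $S^2/A \le 4\sqrt{3}$. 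But that is false in general: $S^2/A \to \infty$ for degenerate triangles, since $S^2 \ge l_1^2$ stays bounded below while $A \to 0$. So a direct reduction along these lines cannot work, and the Corollary is genuinely weaker than the Conjecture near the degenerate end.

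The honest approach, therefore, mirrors the proofs of Theorems~\ref{1upS} and \ref{1opt}: normalize, split the parameter region into a close-to-equilateral piece and a far-from-equilateral piece, and estimate $\mu_1\mu_2 A^2$ directly by trial functions on each. First I would normalize the longest side to length $2$, placing vertices at $(-1,0),(1,0),(a,b)$ with $a \ge 0$, $b > 0$, $(a+1)^2 + b^2 \le 4$, so that $A = b$ and, in the parameters $r = (l_2+l_3)/2$, $s = (l_2-l_3)/2$ of Section~\ref{pr:1opt}, one has $b^2 = (r^2-1)(1-s^2)$ and $q = r^2+s^2+2$. For the close-to-equilateral region I would use the transplanted eigenfunctions $v_1 = u_1\circ\tau^{-1}$ and $v_2 = u_2\circ\tau^{-1}$: Poincaré's principle applied simultaneously (picking $\gamma$ as in the proof of Theorem~\ref{12upA} to make the gradients orthogonal, which controls $H(\mu_1,\mu_2)$, together with the $L^2$-orthogonal pair $v_1, v_2$ controlling $M(\mu_1,\mu_2)$) yields explicit rational upper bounds for $\mu_1$ and $\mu_2$ whose product, times $b^2$, should be shown $\le 16\pi^4/27$; after clearing denominators this becomes a polynomial inequality in $r,s$ that is checked by the now-familiar convexity-plus-endpoints argument on the relevant subtrapezoid. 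For the far-from-equilateral region I would instead bound $\mu_1\mu_2$ by the product $R[f_1] R[f_2]$ of Rayleigh quotients of the linear and quadratic trial functions $f_1 = x - a/3$, $f_2 = (x-a/3)^2 - (3+a^2)/18$ from the proof of Theorem~\ref{12upA} (these have orthogonal gradients, hence bound $\mu_1\mu_2 \le R[f_1]R[f_2] = (18/(3+a^2))(360/(7(3+a^2)))$), giving $\mu_1\mu_2 A^2 \le 6480\, b^2 / (7(3+a^2)^2)$, and then verify $6480\, b^2 \le (16\pi^4/27)\cdot 7 (3+a^2)^2$ using $b^2 \le 3 - a^2 \le 3$ and $3 + a^2 \ge 3$; one must check the constant works, i.e. that $6480 \cdot 3 \le (16\pi^4/27)\cdot 7 \cdot 9 \approx 3635$, which it does \emph{not}, so this crude bound fails and one needs the sharper $b^2 = (r^2-1)(1-s^2)$ with $r$ bounded away from $2$ — the decay of $r^2-1$ away from the equilateral corner is what makes the linear/quadratic trial functions succeed in that region, just as in Theorem~\ref{12upA}'s case analysis.

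The main obstacle I expect is the bookkeeping of where the two regimes meet: one must choose the threshold (some value $r_0 < 2$, or equivalently a $q_0 < 6$) so that the transplanted-eigenfunction estimate still holds for $r \ge r_0$ and the polynomial trial-function estimate holds for $r \le r_0$, and unlike the first-eigenvalue theorems there is no slack to spare at the degenerate end, since Conjecture~\ref{harmonicA} is (by the remark after it and the known degenerate asymptotics $\mu_1 \sim 4j_{0,1}^2/D^2$, $\mu_2 \sim \pi^2/\text{(something)}$) presumably not saturated degenerately but could be close. Concretely, the hard part is producing a single explicit trial-function pair, or a short finite list of them keyed to subregions of the $rs$-triangle, whose product bound is provably below $16\pi^4/27$ everywhere — this is why the statement remains a conjecture in the paper rather than a theorem, and a complete proof would require either a more clever trial function near the degenerate boundary or a genuinely new lower estimate for the geometric factor that is sharp there.
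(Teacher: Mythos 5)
You have not proved the statement, and indeed the paper itself does not: this is Conjecture~\ref{harmonicA}, left open and supported only by numerics and by the strictly weaker Corollary~\ref{cor:geom}. Your opening diagnosis is correct and worth stating precisely: multiplying Theorems~\ref{12upA} and \ref{12upAS} gives only $\mu_1\mu_2 A^3/S^2 \le 4\pi^4/27\sqrt{3}$, and upgrading the factor $A^3/S^2$ to $A^2$ would need $S^2/A \le 4\sqrt{3}$, which by Lemma~\ref{averages} is the \emph{reverse} of the true inequality ($S^2 \ge 4\sqrt{3}\,A$ for every triangle, with equality only in the equilateral case); so the reduction fails not merely near degeneracy but for every non-equilateral triangle. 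Everything after that, however, is a research plan rather than an argument, and the plan has concrete holes that you yourself exhibit.

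Specifically: (i) in the close-to-equilateral regime you never write down, let alone verify, the polynomial inequality in $(r,s)$ that the transplanted eigenfunctions would have to satisfy, nor do you specify the threshold $r_0$ (or $q_0$) at which the regimes are to be glued; (ii) in the far-from-equilateral regime your own computation shows the linear/quadratic trial-function bound fails ($6480\cdot 3 = 19440$ versus roughly $3636$), and the ``sharper use of $b^2=(r^2-1)(1-s^2)$'' that you say is needed is never carried out; (iii) the inequality $\mu_1\mu_2 \le R[f_1]R[f_2]$ that your far-region estimate rests on is not justified: gradient-orthogonality of $f_1,f_2$ gives, via Poincar\'{e}'s principle, only $H(\mu_1,\mu_2) \le H(R[f_1],R[f_2])$, and a single pair of trial functions does not bound $\mu_2$ (hence not the product $\mu_1\mu_2$) without an additional $L^2$-orthogonality argument of the kind used for the arithmetic mean in Theorem~\ref{12upAS}. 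So while your strategy is a sensible mirror of the paper's proofs of Theorems~\ref{1upS} and \ref{1opt}, and your conclusion that the statement remains open is accurate, as a proof attempt it leaves every quantitative step unestablished.
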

This conjecture would be stronger than Corollary~\ref{cor:geom},
where we had $A^3/S^2$ instead of $A^2$, in view of
Lemma~\ref{averages}. It would also be stronger than
Theorem~\ref{12upA}, which uses  the harmonic mean and $A$.

To contrast the last two conjectures, notice that when the
eigenvalue functional is strengthened from the harmonic mean to
the geometric mean, the scaling factor is weakened from the
perimeter to the area.

For the geometric mean on general plane domains, Iosif Polterovich has
conjectured $\mu_1 \mu_2 A^2 \leq \pi^2
(j_{1,1}^\prime)^4$ with equality for the disk (see Mathematisches Forschungsinstitut Oberwolfach MFO Report 6/2009). This inequality is
known up to a factor of $2$, by combining the
Szeg\H{o}--Weinberger inequality $\mu_1 A \leq \pi
(j_{1,1}^\prime)^2$ with the recent result of Girouard \emph{et
al.} that $\mu_2 A \leq 2\pi (j_{1,1}^\prime)^2$ (with equality
holding for a domain degenerating suitably to two disjoint disks
of equal area) \cite{GNP08}. Incidentally, the better bound $\mu_2 A \leq \pi (j_{1,1}^\prime)^2$ holds for domains with $4$-fold rotational symmetry \cite{AB93}.

Lastly, for curved surfaces we raise the open problem of maximizing the Neumann fundamental tone among spherical and hyperbolic triangles of given area, in the two dimensional sphere and hyperbolic disk respectively. Note the Szeg\H{o}--Weinberger inequality for general domains has been extended from euclidean space to curved surfaces in two dimensions \cite[{\S}III.3.3]{B80}, and to the sphere and hyperbolic space in all dimensions \cite{AB95,C84}.

\section{\bf Survey of Dirichlet eigenvalue estimates} \label{dirich}

We close the paper by mentioning analogous results for
Dirichlet eigenvalues. The Dirichlet analogue of the
Szeg\H{o}-Weinberger bound \eqref{SW1} is the Faber-Krahn
inequality
\[
  \lambda_1 A\ge \pi j_{0,1}^2 ,
\]
which holds with equality for the disk. The triangular version of
this inequality appears in the book of P\'olya and Szeg\H{o}
\cite[p.~158]{PS51}:
\[
  \lambda_1 A \ge \frac{4\pi^2}{\sqrt{3}}
\]
with equality for equilateral triangles.

Faber--Krahn type bounds are necessarily one-sided, because a
long, thin domain can have fixed area and $\lambda_1$ arbitrarily
large. To obtain a two-sided bound on $\lambda_1$ one must weaken
the geometric functional. For example, for convex plane domains
one has
\[
  \frac{\pi^2}{16} \leq \lambda_1 \frac{A^2}{L^2} \leq \frac{\pi^2}{4} ,
\]
where the upper bound is due to P\'olya \cite{P60} and the lower
bound to Makai \cite{M62}. Equality holds asymptotically in these
bounds for degenerate circular sectors and degenerate rectangles,
respectively. For triangles a sharper upper bound was proved by
Siudeja \cite{S07}:
\begin{equation}\label{dirtri}
  \frac{\pi^2}{16} \leq \lambda_1 \frac{A^2}{L^2} \leq \frac{\pi^2}{9}
\end{equation}
with equality in the upper bound for equilateral triangles.

These last bounds can be strengthened to include the isoperimetric
excess; see Siudeja \cite[Conjecture 1.2]{S07} and Freitas and
Antunes \cite{AF06}.

The geometric functional $A^2/S^2$ that we combined in Theorem \ref{12upAS} with the arithmetic mean of
the Neumann eigenvalues has been studied also in the Dirichlet case. Freitas \cite{F06} showed for
arbitrary triangles that
\[
  \lambda_1 \frac{A^2}{S^2} \le \frac{\pi^2}{3} ,
\]
which is slightly weaker than \eqref{dirtri}; quadrilaterals have been studied too \cite{fresiu}.
%For quadrilaterals one has (by Freitas and Siudeja \cite{fresiu}) that
%\[
%  \lambda_1 \frac{A^2}{S^2} \le \frac{\pi^2}{2} ,
%\]
%which was shown first for rhombi (Hooker and Protter \cite{HP61})
%and parallelograms (Hersch \cite{H66}). 
Conjectures involving $\lambda_1$ and geometric functionals have been raised
by Antunes and Freitas \cite{AF06}.

The Dirichlet gap conjecture for triangles, due to Antunes and Freitas \cite[Conjecture~4]{AF08}, claims that $(\lambda_2 - \lambda_1)D^2$ is minimal for the equilateral triangle. Some progress has been made recently by Lu and Rowlett \cite{LR08}.

Finally, recall the inverse problem of determining the shape
of a triangle from knowledge of its Dirichlet spectrum. The
spectrum is tremendously redundant, since it is determined by
merely three parameters (the side lengths of the triangle). It is
plausible that the triangle could be determined (up to congruence)
by knowing just finitely many eigenvalues. Chang and DeTurck
\cite{CD89} did so nonconstructively, with the required number of
eigenvalues depending on $\lambda_1$ and $\lambda_2$. A
constructive approach or explicit formula for solving the
inverse problem would be most welcome.

\section*{Acknowledgments} We are grateful to Mark Ashbaugh and Iosif Polterovich, who shared their insights on Neumann eigenvalues and provided a number of helpful references.

\end{document}